\newcommand{\N}{\mathbb{N}}
\newcommand{\W}{\mathcal{W}}
\newcommand{\Z}{\mathbb{Z}}
\newcommand{\Nei}{\mathcal{N}}
\newcommand{\dom}{\mathrm{dom}}
\newcommand{\diam}{\mathop{\mathrm{diam}}}
\newcommand{\Aut}{\mathrm{Aut}}
\newcommand{\floor}[1]{\lfloor #1 \rfloor}
\newcommand{\ceil}[1]{\lceil #1 \rceil}
\renewcommand{\int}[1]{\mathrm{int(#1)}}
\newcommand{\im}[1]{\mathrm{im}(#1)}
\newtheorem{theorem}{Theorem}[section]
\newtheorem{lemma}{Lemma}[section]
\newtheorem{proposition}{Proposition}[section]
\theoremstyle{definition}
\newtheorem{definition}{Definition}[section]
\theoremstyle{remark}
\newtheorem{remark}{Remark}[section]
\newcommand{\Zb}{\overline{\Z}}
\newcommand{\smalltile}[1]{\raisebox{-1mm}{\includegraphics{Tiles/smalltile#1}}}
\newcommand{\pattern}[1]{\begin{smallmatrix}#1\end{smallmatrix}}
\newcommand{\whitetile}{\raisebox{-.5mm}{\includegraphics{Tiles/tiles-7}}}
\newcommand{\blacktile}{\raisebox{-.5mm}{\includegraphics{Tiles/tiles-8}}}
\newcommand{\gen}{\mathscr{L}^0}
\newcommand{\nar}{\mathscr{L}^1}
\definecolor{mypurple}{rgb}{1, .5, 1}
\definecolor{myred}{rgb}{1, .5, .5}
\definecolor{myblue}{rgb}{.5, .5, 1}
\definecolor{mygreen}{rgb}{.5, 1, .5}
\definecolor{darkpurple}{rgb}{.8, .4, .8}
\definecolor{darkred}{rgb}{.8, .4, .4}
\definecolor{darkblue}{rgb}{.4, .4, .8}
\definecolor{darkgreen}{rgb}{.4, .8, .4}
\newcommand{\purple}{\textcolor{darkpurple}{purple}}
\newcommand{\red}{\textcolor{darkred}{red}}
\newcommand{\green}{\textcolor{darkgreen}{green}}
\title{Local generation of tilings: the even bicolor Wang tilesets}
\author{Tom Favereau\footnote{\'Ecole des Mines de Nancy} ~and Mathieu Hoyrup\footnote{Universit\'e de Lorraine, CNRS, Inria, LORIA, Nancy, 54000, France}}
\begin{document}
\maketitle

\begin{abstract}
In this article, we apply the techniques developed in our previous article ``Local generation of tilings'', in which we introduced two definitions capturing the intuitive idea that some subshifts admit a procedure that can generate any tiling and working in a local way. We classify all the Wang tilesets with two colors in which each tile has an even number of each color.

\end{abstract}

\tableofcontents

\ifdefined\addparentheses
\newcommand{\ti}[1]{(#1)}
\else
\newcommand{\ti}[1]{#1}
\fi

\newcommand{\myref}[1]{%
    \ifthenelse{\equal{#1}{ex_geom}}{3.2}{%
    \ifthenelse{\equal{#1}{def_equivalence}}{3.2}{%
    \ifthenelse{\equal{#1}{thm_obs_C0}}{5.1}{%
    \ifthenelse{\equal{#1}{rmk_obs_sub}}{5.1}{%
    \ifthenelse{\equal{#1}{thm_notin_nar}}{6.1}{%
    \ifthenelse{\equal{#1}{prop_weak_factor_mixing}}{3.4}{%
    \ifthenelse{\equal{#1}{ex_block}}{3.3}{%
    \ifthenelse{\equal{#1}{sec_corners}}{7.3.2}{%
    \ifthenelse{\equal{#1}{sec_all_even}}{7.3.1}{%
    \ifthenelse{\equal{#1}{prop_inclusion}}{2.4}{%
    \ifthenelse{\equal{#1}{prop_separation}}{4.10}{%
    \ifthenelse{\equal{#1}{def_equivalence}}{3.2}{%
    \ifthenelse{\equal{#1}{thm_block_gen}}{7.1}{%
    }}}}}}}}}}}}}}
\section{Introduction}
In \cite{FH24}, we have introduced two classes of subshifts,~$\gen\subseteq\nar$, that are intended to capture the intuitive idea that certain subshifts admit a procedure that can generate all their tilings in a local way.

In the present article, we classify all the Wang tilesets consisting of bicolor tiles having an even number of each color, according to these classes. Although there are 255 such non-empty tilesets, their symmetries reduce that number to 36 equivalence classes. We find that 13 of them belong to~$\gen$, 8 do not belong to~$\gen$ and 7 do not belong to~$\nar$ (the other 8 equivalence classes of tilesets contain a tile that cannot be used in a tiling, therefore they induce the same subshift as a smaller tileset). We conjecture that the 8 classes of tilesets that do not belong to~$\gen$ do not belong to~$\nar$ either.

The article is organized as follows. In Section \ref{sec_def}, we recall the notions of local generations introduced in \cite{FH24}. Section \ref{sec_obstructions} gathers the tilesets that do not belong to~$\gen$. The tilesets that do not belong to~$\nar$ are presented in Section \ref{sec_transitions}. In Section \ref{sec_positive}, we show that all the other tilesets belong to~$\gen$. Section \ref{sec_classification} summarizes the results by listing all the equivalence classes of even bicolor Wang tilesets, and their classification w.r.t.~the classes $\gen$ and~$\nar$.

\section{Background}\label{sec_def}

Let us first recall the classical definitions needed for this article.

\subsection{Group actions}
For~$d\geq 1$,~$(\Z^d,+,0)$ is an abelian group. A~$\Z^d$\textbf{-action} on a set~$E$ is a homomorphism from~$\Z^d$ to the group of bijections from~$E$ to~$E$ with the composition operation. We also say that~$E$ is a~$\Z^d$\textbf{-set}. The result of applying the image of~$p\in\Z^d$ under the homomorphism to~$e\in E$ is written as~$p\cdot e\in E$. A \textbf{continuous~$\Z^d$-action} on a topological space~$X$ is a homomorphism from~$\Z^d$ to the group of homeomorphisms from~$X$ to~$X$ with the composition operation. We also say that~$X$ is a~$\Z^d$\textbf{-space}. A~$\Z^d$-space~$Y$ is a \textbf{factor} of a~$\Z^d$-space~$X$ if there exists a continuous surjective map~$f:X\to Y$ that commutes with the actions:~$p\cdot f(x)=f(p\cdot x)$. $f$ is called a \textbf{factor map}. If~$f$ is a homeomorphism, then~$f$ is a \textbf{conjugacy} and~$X$ and~$Y$ are \textbf{conjugate}.

A~$\Z^e$-space~$Y$ is a \textbf{weak factor} of a~$\Z^d$-space~$X$ if there exists a continuous surjective map~$f:X\to Y$ and a group homomorphism~$\varphi:\Z^e\to\Z^d$ such that~$p\cdot f(x)=f(\varphi(p)\cdot x)$. If~$f$ is a homeomorphism and~$\varphi$ an isomorphism, then~$X$ and~$Y$ are \textbf{weakly conjugate} (see \cite[Definition \myref{def_equivalence}]{FH24}).

\subsection{Symbolic dynamics}\label{sec_dynamics}

If~$A$ is a finite alphabet and~$E$ is countable, then~$A^E$ is endowed with the Cantor topology, which is the product of the discrete topology on~$A$. This makes~$A^E$ a compact metrizable space. The \textbf{shift action} on~$A^{\Z^d}$ is the continuous~$\Z^d$-action defined, for~$p\in\Z^d$ and~$x\in A^{\Z^d}$, by~$p\cdot x=y$ where~$y(q)=x(p+q)$. A~\textbf{$\Z^d$-subshift} is a compact set~$X\subseteq A^{\Z^d}$ which is shift-invariant, i.e.~satisfies~$\sigma^p(X)=X$ for all~$p\in\Z^d$. A~\textbf{$\Z^d$-fullshift} is~$A^{\Z^d}$ for some finite alphabet~$A$. If~$H$ is a subgroup of~$\Z^d$, then a configuration~$x\in A^{\Z^d}$ is~$H$\textbf{-periodic} if~$x(p+h)=x(p)$ for all~$p\in\Z^d$ and~$h\in H$. The~\textbf{$H$-periodic shift} over~$\Sigma$ is the subshift~$X_H\subseteq\Sigma^{\Z^d}$ defined as the set of all the~$H$-periodic configurations.

Let~$d\geq 1$ and~$\Sigma$ a finite alphabet. We endow~$\Z^d$ with the metric~$d(p,q)=\max_i|p_i-q_i|$, where~$p=(p_1,\ldots,p_d)$ and~$q=(q_1,\ldots,q_d)$. For~$F\subseteq\Z^d$,~$\diam(F)=\max_{p,q\in F}d(p,q)$, which is~$\infty$ if~$F$ is infinite. For~$F,G\subseteq\Z^d$,~$d(F,G)=\min_{p\in F,q\in G}d(p,q)$. We will often consider the \textbf{cubes}~$S_n=[0,n-1]^d$ and~$Q_n=[-n,n]^d$, for~$n\in\N$.

Let~$F\subseteq\Z^d$ be any set. An~$F$\textbf{-pattern} is an element~$\pi\in \Sigma^F$. A \textbf{pattern} is an~$F$-pattern for some~$F$, which is called the \textbf{domain} of~$\pi$ and is denoted by~$\dom(\pi)$. A pattern~$\pi'$ \textbf{extends} a pattern~$\pi$ if~$\dom(\pi)\subseteq\dom(\pi')$ and~$\pi'(p)=\pi(p)$ for all~$p\in\dom(\pi)$. A \textbf{finite pattern} is an~$F$-pattern for some finite~$F\subseteq\Z^d$. If~$\pi,\pi'$ are patterns with disjoint domains~$F,F'$ respectively, then~$\pi\cup\pi'$ is the pattern with domain~$F\cup F'$ extending~$\pi$ and~$\pi'$. A \textbf{configuration} is an element~$x\in\Sigma^{\Z^d}$.

If~$\pi$ is a pattern, then~$[\pi]$ is the set of configurations extending~$\pi$. The set~$\Sigma^{\Z^d}$ is endowed with the topology generated by the sets~$[\pi]$ where~$\pi$ is a finite pattern. As already mentioned, the space~$\Sigma^{\Z^d}$ is endowed with the \textbf{shift action}, which is the continuous~$\Z^d$-action~$p\cdot x=y$ where~$y(q)=x(p+q)$. We will often write~$\sigma^p(x):=p\cdot x$. The shift also acts on patterns: if~$\pi$ is an~$F$-pattern, then~$\sigma^p(\pi)$ is the~$F'$-pattern~$\pi'$ where~$F'=F-p$ and~$\pi'(q)=\pi(p+q)$ for~$q\in F'$. If~$p\in\Z^d$ and~$x$ is a configuration, then we say that a pattern~$\pi$ \textbf{appears at position~$p$ in~$x$} if~$\sigma^p(x)$ extends~$\pi$.

A \textbf{subshift} is a closed subset~$X$ of~$\Sigma^{\Z^d}$ which is \textbf{shift-invariant}, i.e.~satisfies~$\sigma^p(X)=X$ for all~$p\in\Z^d$. We fix a subshift~$X\subseteq \Sigma^{\Z^d}$. All the subsequent notions are relative to~$X$, but we do not mention~$X$ which will always be clear from the context. A \textbf{valid configuration} is an element~$x\in X$. A \textbf{valid pattern} is a pattern appearing in some valid configuration, at any position or equivalently at the origin. Two disjoint regions~$F,G\subseteq\Z^d$ are \textbf{independent} if for every valid~$F$-pattern~$\pi$ and every valid~$G$-pattern~$\pi'$,~$\pi\cup \pi'$ is valid.

A \textbf{subshift of finite type (SFT)} is the subshift induced by a finite set~$P$ of finite patterns, called forbidden patterns, and defined as the set of configurations in which no~$\pi\in P$ appears. Let~$C$ be a finite set of colors. A \textbf{Wang tile} over~$C$ associates to each edge of the unit square a color in~$C$. A \textbf{Wang tileset} over~$C$ is a set~$T$ of Wang tiles over~$C$. It induces a~$\Z^2$-subshift of finite type~$X_T\subseteq T^{\Z^2}$ which is the set of configurations in which neighbor cells have the same color on their common edge.

\subsection{Dynamical properties}

We recall classical dynamical properties of subshifts, that can be found in \cite{Furstenberg67} or \cite{Vries13}.
\begin{definition}
A subshift~$X\subseteq\Sigma^{\Z^d}$ is \textbf{transitive} if for every pair of valid finite patterns~$\pi,\pi'$, there exists~$p\in\Z^d$ such that~$\pi\cup \sigma^p(\pi')$ is a valid pattern; in other words,~$\pi$ and~$\pi'$ appear in a common configuration. 

A subshift~$X\subseteq\Sigma^{\Z^d}$ is \textbf{strongly irreducible} if there exists~$n\in\N$ such that all regions~$F,G\subseteq \Z^d$ satisfying~$d(F,G)\geq n$ are independent.

A subshift~$X\subseteq\Sigma^{\Z^d}$ is \textbf{mixing} if for every~$m\in\N$ there exists~$n\in\N$ such that all regions~$F,G\subseteq \Z^d$ satisfying~$\diam(F)\leq m,\diam(G)\leq m$ and~$d(F,G)\geq n$ are independent.

A subshift~$X\subseteq\Sigma^{\Z^d}$ is \textbf{weakly mixing} if~$X\times X$ is transitive, equivalently if for every~$n\in\N$, there exists~$p\in\Z^d$ such that~$Q_n$ and~$p+Q_n$ are independent.
\end{definition}
 The equivalent definition of weak mixing was proved in \cite{FH24}.

One has the following chain of implications:
\begin{center}
strongly irreducible $\implies$ mixing $\implies$ weakly mixing $\implies$ transitive.
\end{center}

\subsection{Local generation}
We recall the notions of local generation introduced in \cite{FH24}.
\begin{definition}[\ti{The class $\gen$}]
Let~$d\geq 1$. We define~$\gen_d$ as the smallest class of subshifts containing the~$\Z^d$-fullshifts, the periodic~$\Z^d$-shifts and the countable~$\Z^d$-subshifts, and which is closed under finite products and factors. Let~$\gen=\bigsqcup_{d\geq 1}\gen_d$.
\end{definition}

Let~$A,B$ be finite alphabets,~$E,F$ be countable sets and~$f:A^E\to B^F$ be continuous w.r.t.~the product topologies. To each~$q\in F$ is associated~$\W_f(q)\subseteq E$, which is the minimal region on which the values of any~$x\in A^E$ determine the value of~$f(x)$ at~$q$.
\begin{definition}[\ti{Narrow function}]
Let~$r\in\N$. A continuous function~$f:A^E\to B^F$ is \textbf{$r$-narrow} if for all~$q\in F$,~$|\W_f(q)|\leq r$. We say that~$f$ is \textbf{narrow} if it is~$r$-narrow for some~$r\in\N$.
\end{definition}

Note that a function is~$0$-narrow if and only if it is constant. The identity~$f:A^E\to A^E$ is~$1$-narrow. The composition of two narrow functions is narrow. 

We come to the second notion of local generation. Let~$\Sigma$ be finite and~$F$ be countable.

\begin{definition}[\ti{The class $\nar$}]
A compact set~$X\subseteq\Sigma^F$ belongs to~$\nar$ if it is a countable union of images of narrow functions, i.e.~$X=\bigcup_{n\in\N} X_n$ where~$X_n=\im{f_n}$ and~$f_n:A_n^\N\to X$ is~$r_n$-narrow.
\end{definition}

The second notion of local generation is a relaxation of the first one \cite[Propositions \myref{prop_inclusion} and \myref{prop_separation}]{FH24}.
\begin{proposition}
One has~$\gen\subsetneq \nar$.
\end{proposition}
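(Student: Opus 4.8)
The plan is to prove the two halves separately: the inclusion $\gen\subseteq\nar$ by structural induction on the inductive definition of $\gen$, and the strictness by exhibiting a separating subshift. Throughout I use that a member of $\gen_d$ is a $\Z^d$-subshift, hence a compact subset of $\Sigma^{\Z^d}$ with $\Z^d$ countable, which is exactly the ambient setting in which $\nar$ is defined, so the two classes can be compared.

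For the inclusion I would show that $\nar$ contains the three kinds of generators of $\gen$ and is closed under its two operations, so that the smallest class with these properties is contained in $\nar$. A fullshift $A^{\Z^d}$ is the image of the identity, which is $1$-narrow after identifying $\Z^d$ with $\N$. A periodic shift $X_H$ is the image of the map sending a pattern on a fundamental domain of $H$ to its $H$-periodic extension; each output coordinate copies a single input coordinate, so this map is $1$-narrow. A countable subshift $\{x_n:n\in\N\}$ is the union $\bigcup_n\{x_n\}$ of images of the constant ($0$-narrow) maps $x\mapsto x_n$. For closure under finite products, note that $A^\N\times B^\N\cong(A\times B)^\N$ and that a product of narrow maps is narrow, so from $X=\bigcup_n\im{f_n}$ and $Y=\bigcup_m\im{g_m}$ one gets $X\times Y=\bigcup_{n,m}\im{f_n\times g_m}$. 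For closure under factors, the key input is that a factor map $g$ between subshifts is a sliding block code (Curtis--Hedlund--Lyndon), hence narrow with a uniform window; since a composition of narrow maps is narrow, $Y=g(X)=\bigcup_n\im{g\circ f_n}\in\nar$. Every set produced is a continuous image of a compact set, hence compact, so it genuinely lies in $\nar$.

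The strictness is the substantial part, and it is where I expect the real work. The inclusion proof makes the asymmetry visible: membership in $\gen$ is witnessed by a \emph{finite} derivation tree whose leaves are fullshifts, periodic shifts and countable subshifts, whereas membership in $\nar$ only asks for a \emph{countable} union of narrow images. So I would look for a compact subshift $X$ that is manifestly a countable union of narrow images --- for instance a compact union of an increasing family of $\gen$-subshifts --- but whose global structure cannot be produced by finitely many products and factors of the generators. The obstacle is to pin down an invariant that is monotone under factors, controlled (e.g.\ subadditive) under finite products, finite for all three generators, and yet forbidden for the candidate $X$.

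Topological entropy is the first invariant one tries, but I expect it to be insufficient: the periodic and countable generators have zero entropy, a finite product of fullshifts has entropy $\sum_i\log|A_i|$, and factoring can only decrease entropy, so a wide range of entropies is realized inside $\gen$ and no crude bound separates the classes. I would therefore aim for a finer, rank-type invariant --- informally, a count of ``independent positive-entropy directions'' that any $\gen$-derivation bounds by its finite number of fullshift leaves --- and then engineer $X\in\nar$ so that this quantity is infinite. Verifying that such an $X$ is genuinely in $\nar$ while proving the lower bound that excludes it from $\gen$ is the crux; this is precisely the content of the separation established in \cite[Proposition \myref{prop_separation}]{FH24}, which we invoke here.
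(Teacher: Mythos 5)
The paper does not actually prove this proposition: it is recalled from the companion article, with both halves attributed to \cite{FH24} (Propositions \myref{prop_inclusion} and \myref{prop_separation} there), so there is no in-paper argument to compare against in detail. Your proposal is compatible with that and goes further on one half: for strictness you invoke exactly the external result (Proposition \myref{prop_separation} of \cite{FH24}) that the paper itself cites, while for the inclusion $\gen\subseteq\nar$ you supply a self-contained structural induction that the paper delegates to Proposition \myref{prop_inclusion} of \cite{FH24}. That induction is sound: a bijection $\N\to\Z^d$ makes the fullshift a $1$-narrow image; the $H$-periodic shift is a $1$-narrow image of $\Sigma^D$ for a (countable, possibly infinite) fundamental domain $D$, which covers infinite-index $H$; countable subshifts are countable unions of images of $0$-narrow constant maps; products pass through $(A\times B)^\N$ with the narrowness bound $\max(r_n,s_m)$; and for factors, Curtis--Hedlund--Lyndon gives a sliding block code with window size $r$, so $g\circ f_n$ is $r\cdot r_n$-narrow --- a point worth spelling out, since narrowness is defined for maps on full product spaces whereas $g$ is only defined on $X$, but the window bound composes coordinatewise without needing to extend $g$. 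The one criticism: your two middle paragraphs on entropy and a hoped-for ``rank-type'' invariant are speculation, not proof, and should be cut or explicitly flagged as motivation; since the separating example and its $\gen$-exclusion are never constructed, the strictness rests entirely on the citation --- which is, however, precisely how the present paper handles it as well.
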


\subsection{Symmetry group}\label{sec_group}
We are going to analyze bicolor Wang tilesets. There are $2^4=16$ possible tiles, so~$2^{16}-1=65535$ possible tilesets (excluding the empty tileset). We can reduce the number of cases by observing that there is a group of order 32 acting on bicolor tilesets, such that two tilesets that are equivalent under this group action induce weakly conjugate subshifts, so we only need to consider one representative for each equivalence class. A computer program shows that there are $2890$ equivalence classes. This number is still large for a systematic classification, so in this article we only consider the \textbf{even bicolor tilesets}, which only use tiles having an even number of edges of each color. There are~$8$ even tiles, which we draw using wires instead of colors for aesthetic reasons: \smalltile{0000}, \smalltile{1100}, \smalltile{0110}, \smalltile{0011}, \smalltile{1001}, \smalltile{1010}, \smalltile{0101}, \smalltile{1111}. There~$2^8-1=255$ even non-empty tilesets, and a computation shows that there are~$36$ equivalence classes under the symmetry group action.

Let us describe the symmetry group. It contains:
\begin{itemize}
\item Eight geometric transformations:
\begin{itemize}
\item Rotations by 0, 90, 180 or 270 degrees,
\item Reflections accross the horizontal, vertical or one of the diagonal axes,
\end{itemize}
\item Chromatic transformations:
\begin{itemize}
\item Complementation of the colors on the horizontal edges, or on the vertical edges, or both,
\end{itemize}
\item Combinaison of a geometric and a chromatic transformation.
\end{itemize}

The geometric transformations are elements of the dihedral group~$D_4$, which is the symmetry group of the square. The chromatic transformations are elements of the Klein group~$K_4=\Z/2\Z\times \Z/2\Z$. These two types of transformations do not commute, but~$K_4$ is a normal subgroup of~$G$ so~$G$ is a semidirect product~$K_4\rtimes_\phi D_4$ for some homomorphism~$\phi:D_4\to \Aut(K_4)$, therefore~$G$ has order~$|D_4|\times |K_4|=8\times 4=32$. The homomorphism~$\phi$ sends the rotation by~$90$ degrees to the permutation between horizontal and vertical color complementations, and sends reflections to the identity. It corresponds to the fact that a rotation followed by a horizontal color complementation equals a vertical color complementation followed by the rotation, and that a reflection and a color complementation commute.

\begin{proposition}
For every bicolor tileset~$T$ and every~$g\in G$,~$T$ and~$g\cdot T$ induce weakly conjugate subshifts.
\end{proposition}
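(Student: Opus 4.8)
The plan is to exploit the fact that weak conjugacy is an equivalence relation together with the fact that $G$ is finitely generated, reducing the statement to the case where $g$ ranges over a fixed generating set. First I would check the equivalence-relation properties: if $(f,\varphi)$ witnesses that $X_{T_1}$ and $X_{T_2}$ are weakly conjugate and $(h,\psi)$ witnesses the same for $X_{T_2}$ and $X_{T_3}$, then $(h\circ f,\varphi\circ\psi)$ witnesses it for $X_{T_1}$ and $X_{T_3}$ (transitivity), while $(f^{-1},\varphi^{-1})$ gives symmetry. Since $D_4$ is generated by the $90$ degree rotation and one reflection and $K_4$ by the horizontal and vertical color complementations, the four elements generate $G$. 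Writing an arbitrary $g=s_1\cdots s_k$ as a product of generators and chaining the weak conjugacies $X_{s_{i+1}\cdots s_k\cdot T}\sim X_{s_i\cdots s_k\cdot T}$, transitivity yields $X_T\sim X_{g\cdot T}$. This reduction needs the statement for each generator applied to an \emph{arbitrary} tileset (the intermediate tilesets $s_{i+1}\cdots s_k\cdot T$), so it suffices to prove: for every generator $g$ and every tileset $T$, $X_T$ and $X_{g\cdot T}$ are weakly conjugate.

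For a uniform treatment I would attach to each generator $g$ a matrix $M_g\in GL_2(\Z)$ in the symmetry group of the square lattice: $M_g=\id$ when $g$ is a color complementation, and $M_g$ the corresponding isometry (the rotation matrix or a reflection matrix) when $g$ is geometric. Viewing a tile as a coloring of the four directed edges $\{\pm e_1,\pm e_2\}$, I define $f_g:X_T\to X_{g\cdot T}$ by $f_g(x)(p)=g\cdot x(M_g^{-1}p)$ and set $\varphi_g=M_g^{-1}$. Each output cell of $f_g$ depends on a single input cell, so $f_g$ is continuous; its two-sided inverse is $x\mapsto(p\mapsto g^{-1}\cdot x(M_g p))$, so $f_g$ is a homeomorphism, and $\varphi_g\in GL_2(\Z)$ is an automorphism of $\Z^2$. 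The intertwining identity $p\cdot f_g(x)=f_g(\varphi_g(p)\cdot x)$ follows by a direct substitution from the definitions.

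The substance of the argument is that $f_g$ lands in $X_{g\cdot T}$, i.e.\ that it sends valid configurations to valid configurations, and here the two kinds of generators are verified separately. For a color complementation ($M_g=\id$), $g$ complements the colors on one pair of parallel edges and fixes the other pair; since any shared edge between adjacent tiles is treated identically from both sides, a matching pair of colors stays matching, so validity is preserved. For a geometric generator the point is that $g$ permutes the four edges of a tile by exactly the rule $M_g$ by which the lattice directions are permuted; performing the adjacency check for a horizontal (resp.\ vertical) pair of cells of $f_g(x)$ reduces, after the change of variable $p\mapsto M_g^{-1}p$, to the adjacency constraint already satisfied by $x$ along the direction $M_g^{-1}e_1$ (resp.\ $M_g^{-1}e_2$). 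Surjectivity of $f_g$ onto $X_{g\cdot T}$ then follows from the explicit inverse.

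I expect the main obstacle to be the bookkeeping in the geometric case: one must fix once and for all a convention identifying a Wang tile with a function on directed edges and the action of $D_4$ on tiles with the linear action $M_g$ on those directions, and then confirm that the orientation-reversing generators (the reflections) as well as the orientation-preserving rotation all satisfy this edge/direction compatibility, so that the constraint transported by $M_g^{-1}$ is again a genuine adjacency constraint between the correct pair of neighbors on the correct edge, rather than a spurious one. Once this compatibility is recorded, the check for each generator is a one-line substitution, and the equivalence-relation reduction of the first paragraph completes the proof.
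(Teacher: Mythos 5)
Your proposal is correct and follows essentially the same route as the paper's proof: reduce to a generating set of $G$ via transitivity of weak conjugacy, observe that chromatic generators induce conjugacies, and that geometric generators induce weak conjugacies through the corresponding lattice isometry with $\varphi_g=M_g^{-1}$. The only difference is that the paper delegates the geometric case to a worked example in \cite{FH24}, while you carry out that edge/direction bookkeeping explicitly — your version is self-contained but not a different argument.
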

\begin{proof}
Each geometric~$g$ transformation induces a geometric equivalence as in Example \myref{ex_geom} in \cite{FH24}. Each chromatic transformation induces a conjugacy. These two types of transformations generate the group and weak conjugacy between subshifts is transitive, so every group element induces a weak conjugacy between subshifts.
\end{proof}

\section{Subshifts that do not belong to \texorpdfstring{$\gen$}{L0}}\label{sec_obstructions}
We first recall the main technique developed in \cite{FH24} to show that a two-dimensional subshift does not belong to~$\gen$, and then apply it to the even bicolor tilesets.

\subsection{Ramified subshift}
We recall the following notions introduced in \cite{FH24}, where they are discussed in more details.

If~$F\subseteq\Z^2$ and~$r\in\N$, then let~$\Nei(F,r)=\{p:\exists q\in F,d(p,q)\leq r\}$ be the~$r$-neighborhood of~$F$.
\begin{definition}[\ti{Graft}]
Let~$X$ be a~$\Z^2$-subshift,~$F\subseteq\Z^2$,~$\pi$ an~$F$-pattern,~$x\in X$ and~$r\in\N$. We say that~$\pi$ can be \textbf{$r$-grafted into~$x$ at position~$p\in\Z^2$} if there exists~$y\in X$ that coincides with~$x$ outside~$\Nei(p+F,r)$ and such that~$\pi$ appears at position~$p$ in~$y$, i.e.~$y(p+q)=\pi(q)$ for~$q\in F$.
\end{definition}

\begin{definition}[\ti{Ramification}]
Let~$X$ be a~$\Z^2$-subshift. A \textbf{ramification} is a configuration~$x\in X$ together with~$r\in\N,u,v\in\Z^2$ and~$F\subseteq\Z^2$ such that for all~$\lambda,\mu\in\Z$ with~$\mu>0$, the~$F$-pattern appearing at position~$\mu u+\lambda v$ in~$x$ cannot be~$r$-grafted into~$x$ at position~$\lambda v$. We say that~$x$ is an~$(r,v)$\textbf{-ramification}.
\end{definition}

Say that two vectors~$v_1,v_2\in\Z^2$ are \textbf{independent} if~$\lambda_ 1v_1+\lambda_2v_2=0$ implies~$\lambda_1=\lambda_2=0$, for~$\lambda_1,\lambda _2\in\Z$.
\begin{definition}[\ti{Ramified subshift}]
A~$\Z^2$-subshift~$X$ is \textbf{ramified} if for every~$r\in\N$ there exist infinitely many pairwise independent vectors~$v$ such that~$X$ admits a ramification of radius~$r$ and support~$v$.
\end{definition}
If~$r<r'$, then an~$(r',v)$-ramification is also an~$(r,v)$-ramification so, in order to show that~$X$ is ramified, it is sufficient to build a sequence~$(v_r)_{r\in\N}$ of pairwise independent vectors such that~$X$ admits an~$(r,v_r)$-ramification for each~$r$.

\begin{remark}[\ti{Subshift of a ramified subshift}]\label{rmk_obs_sub}
As observed in \cite[Remark \myref{rmk_obs_sub}]{FH24} if~$T$ is a Wang tileset and~$X_T$ has ramifications that can all be built using a proper subset of tiles~$R\subsetneq T$, then for each tileset~$S$ such that~$R\subseteq S\subseteq T$,~$X_S$ is ramified as well.
\end{remark}

If a subshift is ramified, then it is not strongly irreducible. Actually, showing the lack of strong irreducibility will always be the first step when building ramifications.

The main technique to prove that a subshift is not in~$\gen$ is Theorem \myref{thm_obs_C0} in \cite{FH24}.
\begin{theorem}[\ti{An obstruction to being in $\gen$}]\label{thm_obs_C0}
Let~$X$ be a~$\Z^2$-subshift. If~$X$ is ramified and weakly mixing, then~$X\notin \gen$.
\end{theorem}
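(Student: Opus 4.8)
The plan is to prove the contrapositive by structural induction on the construction of $\gen$, after reducing to a normal form. The first step is to observe that the class of subshifts that are factors of finite products of the three base types (fullshifts, periodic shifts, countable subshifts) contains the base cases and is closed under finite products and factors (a product of factors of products is again a factor of a product, and a factor of a factor is a factor); since $\gen$ is the \emph{smallest} such class, every $X\in\gen$ is a factor of a finite product of base subshifts. It therefore suffices to prove the following \emph{Engine Lemma}: if $X=g(Y)$ for a factor map $g$ and a finite product $Y=Y_1\times\cdots\times Y_k$ of base subshifts, and $X$ is weakly mixing, then $X$ is not ramified. This is exactly the contrapositive of Theorem~\ref{thm_obs_C0} restricted to the normal form.

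Two structural facts drive the argument. First, any factor map $g:Y\to X$ is a sliding block code: by compactness of $Y$ and shift-equivariance, $g$ is uniformly continuous, so there is a radius $R$ such that $g(y)(q)$ depends only on the restriction of $y$ to $q+Q_R$. Second, ramification is \emph{reflected} by such codes. The idea is to lift a ramifying configuration of $X$ to $Y$, thicken the support $F$ to $F'=\Nei(F,R)$, and note that a hypothetical $s$-graft in $Y$ of the $F'$-pattern would, upon applying $g$, yield an $(s+2R)$-graft in $X$ of the corresponding $F$-pattern; since $X$ has no such graft, $Y$ has no $s$-graft. Thus an $(s+2R,v)$-ramification of $X$ produces an $(s,v)$-ramification of $Y$, and feeding in the infinitely many independent directions supplied by ramification of $X$ at radius $s+2R$ shows: if $X$ is ramified then $Y$ is ramified.

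Next I would reduce ramification of the product to a single coordinate. A graft in $Y=\prod_i Y_i$ is precisely a simultaneous graft in every coordinate, so an obstruction in the product forces an obstruction in some coordinate at each instance; using monotonicity of ramification in the radius together with the finiteness of the number of coordinates, one extracts a \emph{single} coordinate $Y_{i_0}$ that is ramified in the same direction. Since fullshifts are strongly irreducible and hence not ramified, $Y_{i_0}$ must be either a periodic shift or a countable subshift. At this point the entire long-range rigidity witnessing ramification of $X$ has been localized to one non-fullshift coordinate of the lift.

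The crux, and the step I expect to be the main obstacle, is to contradict the weak mixing of $X$. The difficulty is that weak mixing is a property of $X$, not of $Y$, and it is \emph{not} inherited by $Y_{i_0}$ through $g$: a factor map can turn a non-weakly-mixing system into a weakly mixing one. My plan is to classify the ramifying base coordinate and show that its rigidity cannot survive into a weakly mixing image. A countable subshift that carries a ramification fails to be weakly mixing, because its ramifications stem from global constraints (a bounded number of ``defects''), which makes two far regions dependent; an infinite periodic shift splits, modulo its period, as a lower-dimensional fullshift whose only long-range constraints lie along the finitely many periodic directions. One then uses the bounded radius $R$ of $g$ together with the independence of far regions guaranteed by weak mixing of $X$ (for every $n$ there is $p$ with $Q_n$ and $p+Q_n$ independent) to argue that the obstruction supplied by $Y_{i_0}$ must appear as a genuine failure of independence in $X$ along the ramification direction, contradicting weak mixing. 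Making this transfer precise — pushing the obstruction of a single non-weakly-mixing coordinate through the sliding block code and pitting it against the independence furnished by weak mixing — is the heart of the proof, whereas the normal form, the sliding-block property, the reflection of ramification, and the coordinate reduction are routine given the framework of \cite{FH24}.
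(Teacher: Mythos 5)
Your preliminary reductions are sound, and since this paper only recalls Theorem~\ref{thm_obs_C0} from \cite{FH24} without proof, I assess your argument on its own terms. The normal form (every $X\in\gen$ is a factor of a finite product of fullshifts, periodic shifts and countable subshifts) is correct, as is the sliding block code property, and your reflection lemma genuinely works: if $g$ has radius $R$ and $y$ is any lift of an $(s+2R,v)$-ramification $x$ of $X$, an $s$-graft of the thickened $\Nei(F,R)$-pattern in $Y$ pushes down to an $(s+2R)$-graft of the $F$-pattern in $X$, so $y$ is an $(s,v)$-ramification of $Y$. The first genuine gap is the coordinate reduction. A graft in $Y_1\times\dots\times Y_k$ fails iff it fails in some coordinate, but that coordinate depends on the instance $(\lambda,\mu)$, whereas a ramification of a single $Y_{i_0}$ in direction $v$ requires one coordinate to block \emph{every} $\lambda\in\Z$ and $\mu>0$ (at best after replacing $u,v$ by multiples). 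Neither finiteness of $k$ nor monotonicity in the radius yields this: the set of instances blocked by a given coordinate need not contain any grid $(\lambda_0+a\Z)\times b\Z_{>0}$. Nothing in your argument rules out, say with $k=2$, that coordinate $1$ blocks exactly the instances with $\mu\le 2^{|\lambda|}$ and coordinate $2$ the complement; then no coordinate is ramified in direction $v$ or any multiple of it, yet every instance is blocked in $Y$. And this is not a removable artifact of the bookkeeping: single base factors really do admit ramifications in isolated directions (the countable subshift whose configurations have at most one black column admits $(r,(0,1))$-ramifications for every $r$, with $u=(1,0)$, and a rank-one periodic shift admits ramifications along its period direction), while no periodic or countable subshift can meet the demand of infinitely many pairwise independent directions. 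A product can therefore distribute the blocking among coordinates, within a single direction, in a way that your localization step cannot see; the definition of ramified is calibrated exactly around this phenomenon.

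The second and larger gap is the one you flag yourself: the confrontation with weak mixing is a plan, not a proof. Weak mixing is a hypothesis on $X$ alone; as you correctly observe, it neither lifts to $Y$ nor descends to $Y_{i_0}$, so the two assertions you substitute for it carry all the weight, and neither is established. The claim that a countable subshift carrying a ramification cannot be weakly mixing rests only on the heuristic of ``a bounded number of defects'' (and even if true it contradicts nothing, since no weak mixing is available for $Y_{i_0}$), and your concluding sentence --- push the coordinate's obstruction through the code and pit it against the independence of far regions in $X$ --- is a paraphrase of the theorem rather than an argument. What is missing is the mechanism by which weak mixing of $X$ interacts with the lifted ramification; one would expect something like using independence of far regions to realize inside $X$ uncountably many combinations of graft/no-graft choices, pairwise distinguished by non-graftability, whose lifts then overwhelm the countable and periodic coordinates --- but nothing of the sort is carried out. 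As written, your proposal establishes only that if a ramified $X$ lies in $\gen$ then some finite product of base subshifts is ramified; the step from there to a contradiction with weak mixing, which you yourself call the heart of the proof, is absent.
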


\subsection{Applications}
There are exactly $8$ equivalence classes of even bicolor tilesets to which Theorem \ref{thm_obs_C0} can be applied.

\begin{theorem}\label{thm_wires_notC0}
The subshifts induced by the following tilesets are ramified and mixing, therefore do not belong to~$\gen$:
\begin{align*}
T_1&=\raisebox{-1mm}{\includegraphics{even_classes/small_class-14-1}} & \text{(Class \ref{class_stretched_irr_stairs})}\\
T_2&=\raisebox{-1mm}{\includegraphics{even_classes/small_class-23-4}} & \text{(Class \ref{class_irr_stairs})}\\
T_3&=\raisebox{-1mm}{\includegraphics{even_classes/small_class-15-1}} & \text{(Class \ref{class_flames})}\\
T_4&=\raisebox{-1mm}{\includegraphics{even_classes/small_class-27-0}} & \text{(Class \ref{class_corners_white})}\\
T_5&=\raisebox{-1mm}{\includegraphics{even_classes/small_class-34-1}} & \text{(Class \ref{class34})}\\
T_6&=\raisebox{-1mm}{\includegraphics{even_classes/small_class-25-4}} & \text{(Class \ref{class25})}\\
T_7&=\raisebox{-1mm}{\includegraphics{even_classes/small_class-29-1}} & \text{(Class \ref{class_wires})}\\
T_8&=\raisebox{-1mm}{\includegraphics{even_classes/small_class-35-1}} & \text{(Class \ref{class35})}
\end{align*}
\end{theorem}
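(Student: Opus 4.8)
The statement combines two regimes, so the plan is to verify separately that each $X_{T_i}$ is \emph{mixing} and that it is \emph{ramified}, and then to invoke Theorem~\ref{thm_obs_C0}. Since mixing implies weak mixing, the hypothesis of that theorem is met as soon as both properties hold, and we conclude $X_{T_i}\notin\gen$. Because the eight tilesets share many tiles, I would set up both arguments once in a tileset-agnostic form---relying on a fixed periodic background configuration of $X_{T_i}$ (for most classes the all-empty configuration built from \smalltile{0000}) and on whatever corner tiles are available to redirect wires---and only then descend to the individual classes, using Remark~\ref{rmk_obs_sub} to transport ramification constructions between a tileset and any tileset containing the relevant rigid sub-tileset.

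For mixing, I would fix $m$ and establish a bounded-range completion to background: there is a radius $c=c(m)$ such that every valid pattern $\pi$ of diameter at most $m$ extends to a configuration coinciding with the background outside $\Nei(\dom(\pi),c)$. The point is that $\pi$ meets the boundary of a box of size $O(m)$ in only $O(m)$ wire-edges, and by the evenness of the tiles these crossing wires occur in even number, so they can be paired and capped by U-turns assembled from the corner tiles inside the surrounding annulus, after which the remainder is filled with the background. Granting this, any two valid patterns of diameter at most $m$ at distance $n\geq 2c$ are independent: complete each to a configuration that is background outside its own $c$-neighborhood and glue over the background region between them. This yields mixing with $n=n(m)$, and crucially $c(m)$ grows with $m$, so the argument gives no uniform radius---consistent with the failure of strong irreducibility that ramification will force.

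For ramification I would follow the prescription recalled before Theorem~\ref{thm_obs_C0} and first exhibit a failure of strong irreducibility, then promote it to ramifications in infinitely many independent directions. The mechanism is a family of rigid directional structures: for a slope realizable from the tiles of $T_i$, the staircase (or straight wire) of period $v$ is a forced, semi-infinite wire whose local phase cannot be altered without propagating a shift along the whole structure. Recall that it suffices to produce, for each $r$, a single support vector $v_r$, the $v_r$ being pairwise independent. For a given $r$ I would take such a structure with step length exceeding $r$, in a slope not used for any $r'<r$; then healing a one-cell defect would require reaching the next step, at distance $>r$, hence is impossible within radius $r$. Concretely, with $F$ a one-cell pattern, $v=v_r$ the period, and $u$ pointing along the structure, the $F$-pattern read at $\mu u+\lambda v$ records a cell out of phase with the one at $\lambda v$, so grafting it there would demand exactly such an un-healable phase shift for every $\mu>0$. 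As infinitely many admissible slopes are available, the $v_r$ can be chosen pairwise independent, and $X_{T_i}$ is ramified.

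The main difficulty is that the two halves pull in opposite directions and must be reconciled tileset by tileset: ramification needs structures that are genuinely \emph{rigid}---no tile present in $T_i$ may permit a bounded detour around a defect---while mixing needs the same tile repertoire to be \emph{flexible} enough to cap arbitrary bounded wire configurations. Thus for each class the real work is to identify which directional structures are rigid, verifying that the specific missing tiles block every short repair, while simultaneously confirming that a small pattern never forces such a structure, so that bounded-range capping still succeeds. I expect the stair classes ($T_1,T_2$) and the flame/corner classes ($T_3,T_4$) to submit to a uniform treatment via Remark~\ref{rmk_obs_sub}, with $T_5,\dots,T_8$ requiring ad hoc choices of the rigid structure and of $u,v,F$; checking the no-graft condition for every $\lambda$ and every $\mu>0$ in each case is the step I expect to need the most care.
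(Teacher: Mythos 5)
Your ramification half is essentially sound and matches the paper's construction (staircase structures with $F$ a single cell, $u=(0,-1)$, periods $v=(n,1)$ that are pairwise independent, with the step count growing with $r$; the paper needs only two such constructions, one inside $T_1$ covering $T_1,T_2,T_3,T_5,T_6,T_8$ and one inside $T_4$ covering $T_4,T_7$, via Remark~\ref{rmk_obs_sub} --- your expectation that $T_5,\dots,T_8$ need ad hoc treatment overestimates the work). The genuine gap is in your mixing half. Your ``bounded-range completion to background'' --- capping the wires crossing the boundary by U-turns built from the available corner tiles --- is impossible for six of the eight tilesets. A finite (bounded) wire component in a valid configuration must attain its four extremal turning points, which forces all four corner tiles \smalltile{1100}, \smalltile{0110}, \smalltile{0011}, \smalltile{1001} to occur (the cross \smalltile{1111} continues both strands straight and cannot serve as a turn). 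But $T_1,T_2,T_3,T_5,T_6,T_8$ each contain at most three of the four corners, and by evenness no tile lets a wire terminate; hence in these tilesets \emph{every} black edge lies on an unbounded wire, and a pattern containing a single wire tile admits no completion agreeing with the white background outside any bounded neighborhood, whatever $c(m)$ you allow. Your claim that ``a small pattern never forces such a structure, so that bounded-range capping still succeeds'' is exactly false here --- it is the same forcing phenomenon you correctly exploit for ramification.

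The paper's proof circumvents this by extending patterns not to bounded islands but to \emph{infinite strips} with white boundaries, with a direction-sensitive case analysis on the displacement $p=(i,j)$ between the two squares: for tilesets containing the vertical wire \smalltile{0101} (Lemma~\ref{lem_stairs_mixing1}, covering $T_2,T_5,T_6,T_8$) one uses horizontal strips when $|j|\geq 3n$ and vertical strips when $|i|\geq 3n$; for $T_1,T_3$ (Lemma~\ref{lem_stairs_mixing}), which lack \smalltile{0101}, vertical strips are unavailable and one instead combines horizontal strips with \emph{diagonal} strips, using the tileset-specific fact that two adjacent horizontal edges cannot both be black, so stairs can grow out of the strip boundaries. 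Only for $T_4$ and $T_7$, which contain all four corners, does your bounded capping work, and there it is essentially the paper's Lemma~\ref{lemma_extending_pi} (proved by an explicit row-by-row rewriting of the boundary word rather than by pairing wires). To repair your proposal you would need to replace the single background-gluing argument by these strip extensions, and verify strip disjointness from the position of $p$, which is where the real tileset-by-tileset work lies.
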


We do not know whether these subshifts belong to~$\nar$ and leave it as an open question (see Section \ref{sec_transitions} for further discussion).

We do not need a specific argument for each one of the 8 tilesets listed above, but an argument for the 2 tilesets~$T_4$ and~$T_7$ containing the corners \smalltile{1100}, \smalltile{0110}, \smalltile{0011} and \smalltile{1001}, and an argument for the other 6.

\subsection{First case: all the corners} We prove Theorem \ref{thm_wires_notC0} for the following tilesets:
\begin{align*}
T_4&= \raisebox{-1mm}{\includegraphics{Wires/wires-3}} &\text{(Class \ref{class_corners_white})}\\
T_7&= \raisebox{-1mm}{\includegraphics{Wires/wires-1}} &\text{(Class \ref{class_wires})}
\end{align*}
We start by showing that the subshift~$X_{T_7}$ is ramified.
 
First,~$X_{T_7}$ is not strongly irreducible: a diagonal~$D_r:=\{(x,x):0\leq x\leq r\}$ filled with the tile \smalltile{1001} entirely determines the lower triangle~$\{(x,y):0\leq x\leq r,0\leq y\leq x\}$ and in particular the cell~$C_r:=(r,0)$ (see Figure \ref{fig_corners_2_even_not_SI}). Therefore,~$D_r$ and~$C_r$ are not independent and~$d(D_r,C_r)=\ceil{\frac{r}{2}}$ can be arbitrarily large.

\begin{figure}[!ht]
\centering
\includegraphics{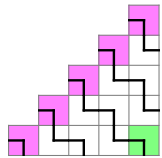}
\caption[Not strongly irreducible]{$X_{T_7}$ is not strongly irreducible: the \purple\ and \green\ regions are not independent, and can be taken arbitrarily far from each other.}\label{fig_corners_2_even_not_SI}
\end{figure}

We then create ramifications illustrated in Figure \ref{fig_corners_white_obs}, showing that~$X_{T_7}$ is ramified. It is clear from the picture that~$(r,v)$-ramifications can be built for any~$r$ and any~$v=(n,1)$ if~$n$ is sufficiently large. We assume that the picture is self-explanatory and does not need a formal argument.

These ramifications do not use the tile \smalltile{1111}, i.e.~belong to~$X_{T_4}$ which is therefore ramified as well by Remark \ref{rmk_obs_sub}.

\begin{figure}[!ht]
\centering
\includegraphics{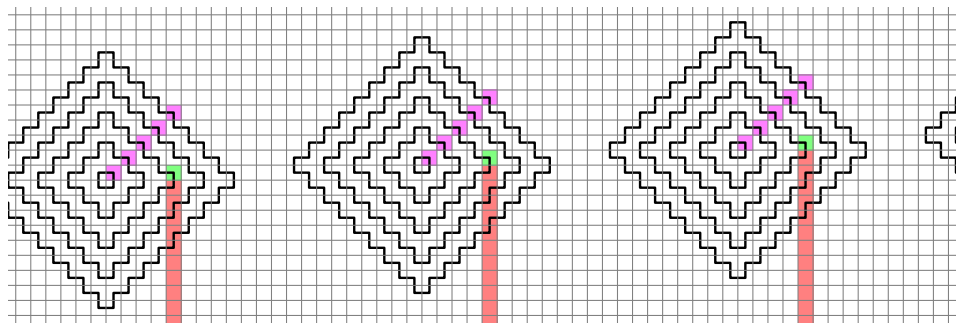}
\caption{A ramification in~$X_{T_4}$ and~$X_{T_7}$: the content of a \red\ cell cannot be grafted in a \green\ cell without changing the content of the corresponding \purple\ region. The distance~$r$ between the \purple\ and \green\ regions can be made arbitrary large by increasing the number of stairs. The vector~$v$ between two consecutive \green\ cells can take value~$(n,1)$ for any sufficiently large~$n$. One has~$u=(0,-1)$ and~$F=\{(0,0)\}$}\label{fig_corners_white_obs}
\end{figure}

We now prove that both subshifts are mixing, by showing that any finite pattern can be extended to a finite pattern with white boundary.

\begin{lemma}\label{lemma_extending_pi}
For both~$T_4$ and~$T_7$, any valid pattern on~$[0,n-1]^2$ can be extended to a valid pattern on~$[-2n,2n-1]^2$ whose boundary is completely white.
\end{lemma}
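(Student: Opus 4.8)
The plan is to treat the wires that reach the boundary of the central square, route them back into closed caps inside the surrounding margin, and fill everything else with the empty tile \smalltile{0000}, which both tilesets contain. First I would record a parity fact. Summing over the cells of $S=[0,n-1]^2$ the number of wire-carrying edges of each tile gives an even number, because every even tile carries $0$, $2$ or $4$ wires. In that sum each edge internal to $S$ is counted twice while each edge on $\partial S$ is counted once, so the number of wire-edges lying on $\partial S$ is even. Call these the \emph{endpoints} and list them $p_1,\dots,p_{2k}$ in cyclic order around $\partial S$.

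The key structural remark is that neither $T_4$ nor $T_7$ contains a straight wire tile \smalltile{1010} or \smalltile{0101}: the only non-empty tiles are the four corners and, for $T_7$, the cross \smalltile{1111}. Hence a wire that is not part of a crossing must turn by ninety degrees in every cell, so an isolated wire traces a diagonal staircase. I would therefore connect the endpoints in pairs by such staircases living entirely in the annulus $[-2n,2n-1]^2\setminus S$, using only corners and the empty tile. Concretely, I pair consecutive endpoints $(p_1,p_2),(p_3,p_4),\dots$; as chords of $\partial S$ these pairs are pairwise non-crossing, so the corresponding caps can be drawn disjointly and the cross tile is never needed. Each cap is a $\vee$-shaped path of corner tiles leaving $S$ at one endpoint, descending diagonally into the margin on that side, and climbing back to its partner; since two endpoints on the same side are separated by an integer distance, a symmetric $\vee$ whose depth is half their separation reaches the partner exactly, and that depth never exceeds the available margin, which is why the enlarged square is taken as $[-2n,2n-1]^2$. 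Pairs straddling a corner of $S$ are routed the same way around that corner, and all remaining cells of the annulus receive the empty tile. Because the extension uses only tiles common to $T_4$ and $T_7$, one construction serves both (for $T_7$ the given $\pi$ may already contain crosses, but these merely produce more endpoints, which we route exactly as before).

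Finally I would check validity. On $\partial S$ the cell just outside an endpoint carries a wire on its inner edge, matching $\pi$, while the white stretches of $\partial S$ between paired endpoints face empty cells; consecutive corner tiles along a cap meet wire-to-wire, and each cap has white on both of its sides, so it matches the surrounding empty fill. Since every cap stays within distance about $n/2$ of $\partial S$, the cells meeting the outer boundary of $[-2n,2n-1]^2$ are all empty, so that boundary is completely white, as required. The only genuinely delicate point is the disjoint realization of the non-crossing matching by lattice staircases inside the prescribed margin — verifying that the $\vee$-caps fit and never collide — which is precisely what the sizing $2n$ guarantees and which is most convincingly conveyed by a picture; everything else is routine edge-matching.
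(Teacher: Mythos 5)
Your overall strategy (cap the wires reaching $\partial S$ inside the margin and fill the rest with white) is reasonable, but the pairing step has a genuine gap: consecutive endpoints in cyclic order are in general \emph{not} connectable by a cap made of corner tiles. Each corner tile joins one horizontal and one vertical edge, so along any wire avoiding the cross, the crossed edges strictly alternate between horizontal and vertical. Bookkeeping the $\pm 1$ displacements of such an alternating path shows that a corner-only wire joining two endpoints on the top side of $S$, wherever it wanders, must join endpoints at \emph{odd} horizontal separation (and an analogous parity constraint holds for pairs on different sides). Your claim that ``a symmetric $\vee$ whose depth is half their separation reaches the partner exactly'' already hints at the problem — half of an odd integer is not an integer, and for \emph{even} separation no cap of any shape exists in $T_4$. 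This is not a vacuous worry: take $n\geq 6$ and a valid $T_4$-pattern containing two nested wires with top endpoints at $x=0,5$ (outer) and $x=2,3$ (inner), and no other endpoints. Your consecutive pairing produces the pairs $(0,2)$ and $(3,5)$, both at even separation, so neither cap exists and the construction fails for $T_4$; note that the pattern itself is fine, since each of its \emph{own} wires joins endpoints at odd separation. For $T_7$ the same example refutes the assertion that ``the cross tile is never needed'': parity can only be repaired by routing through crosses, whose perpendicular strands must then be closed off by auxiliary loops, which you do not address.

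The gap is repairable: instead of pairing consecutive endpoints, pair them according to the wire components of $\pi$ itself (each wire in $\pi$ is a path with both ends on $\partial S$, these pairs form a non-crossing matching, and they automatically satisfy the parity constraint), then realize each pair by a staircase cap in the margin; crosses in $\pi$ for $T_7$ still require some care since a strand through a cross breaks the alternation. For comparison, the paper sidesteps all of this: it extends $\pi$ only upward and rightward, rewriting the boundary word row by row via the substitution $11\to 00$, $01\to 10$, $0\to 0$ until it becomes all white within $n$ steps, fills $[n,2n-1]^2$ with white, and then completes $[-2n,2n-1]^2$ by reflected copies of the resulting quarter pattern, using the fact that both tilesets are closed under reflections — which buys a uniform argument with no case analysis on endpoint positions, parities, or cap disjointness.
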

\begin{proof}
Let us first show the result for~$X_{T_4}$. Let~$\pi$ be a valid~$[0,n-1]^2$-pattern. We first extend~$\pi$ to a valid~$[0,2n-1]^2$-pattern whose right and upper boundaries are white.

Let~$u_0\in\{0,1\}^n$ be the upper boundary of~$\pi$. We inductively define~$u_i,\ldots,u_n\in\{0,1\}^n$ as follows. Let~$i<n$ and assume that~$u_i$ has been defined. Decompose~$u_i$ as a concatenation of the strings~$11,01$ and~$0$, with possibly a single string~$1$ at the beginning, and note that the decomposition is unique because these strings are not suffixes of each other, so starting from the end of~$u_i$, the choice is alway unique. Then define~$u_{i+1}$ from~$u_i$ by replacing~$11$ with~$00$,~$01$ with~$10$,~$0$ with~$0$, and the possible~$1$ at the beginning with~$0$. Several observations can be made: the string~$11$ can appear only in~$u_0$; the position of the rightmost~$1$ in~$u_i$ is strictly increasing in~$i$, so~$u_n=0^n$.

We can define a~$[0,n-1]\times [n,2n-1]$-pattern whose lower boundary is~$u_0$, such that row~$n+i$ has lower boundary~$u_i$ and upper boundary~$u_{i+1}$ (in particular the upper boundary of the pattern is~$u_n$, i.e.~it is white), and whose right boundary is white (see Figure \ref{fig_row_ui}).
\begin{figure}[!ht]
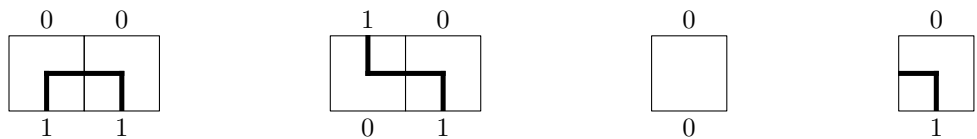

\centering
\hspace{\stretch{1}}\includegraphics{Mixing/pattern-1}\hspace{\stretch{2}}\includegraphics{Mixing/pattern-2}\hspace{\stretch{2}}\includegraphics{Mixing/pattern-3}\hspace{\stretch{2}}\includegraphics{Mixing/pattern-4}\hspace{\stretch{1}}
\caption[Mixing]{Filling the row between~$u_i$ and~$u_{i+1}$.}\label{fig_row_ui}
\end{figure}

In the same way, we can define a~$[n,2n-1]\times [0,n-1]$-pattern whose left-boundary is the right-boundary of~$\pi$ and whose right and upper boundaries are white (apply the same technique up to a symmetry accross the diagonal).

Then fill the rectangle~$[n,2n-1]^2$ with white tiles. We obtain a~$[0,2n-1]^2$-pattern extending~$\pi$, whose right and upper boundaries are white. We then copy symmetric versions of this pattern to fill~$[-2n,2n-1]^2$ (see Figure \ref{fig_extending_pi}), which is possible because applying a symmetry to each tile yields a tile that belongs to the tileset.
\begin{figure}[!ht]
\centering
\includegraphics{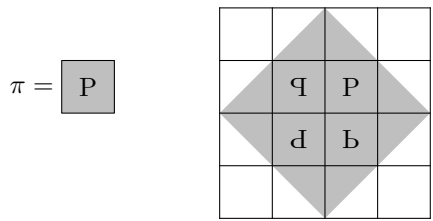}
\caption[Mixing]{Extending~$\pi$.}\label{fig_extending_pi}
\end{figure}

Observe that the same construction can be applied to~$X_{T_7}$, because~$T_7$ contains~$T_4$ and has the same symmetries.
\end{proof}

Finally, the following argument applies to both~$X_{T_4}$ and~$X_{T_7}$. For~$m\in\N$, let~$n=4m+1$. Let~$F,G\subseteq\Z^2$ be squares of side length~$m$ satisfying~$d(F,G)\geq n$. If~$\pi,\pi'$ are valid patterns on~$F$ and~$G$ respectively, then by Lemma \ref{lemma_extending_pi} they can be extended on the larger squares~$\Nei(F,2m)$ and~$\Nei(G,2m)$ to valid patterns with white boundaries. These larger squares are disjoint, so one can fill the rest of the plane with white cells, yielding a valid configuration and shows that~$F$ and~$G$ are independent. Therefore, both tilesets are mixing.

\subsection{Second case: missing corners}
We now prove Theorem \ref{thm_wires_notC0} for the other tilesets:
\begin{align*}
T_1&=\raisebox{-1mm}{\includegraphics{even_classes/small_class-14-1}}&\text{(Class \ref{class_stretched_irr_stairs})}\\
T_2&=\raisebox{-1mm}{\includegraphics{even_classes/small_class-23-4}}&\text{(Class \ref{class_irr_stairs})}\\
T_3&=\raisebox{-1mm}{\includegraphics{even_classes/small_class-15-1}}&\text{(Class \ref{class_flames})}\\
T_5&=\raisebox{-1mm}{\includegraphics{even_classes/small_class-34-1}}&\text{(Class \ref{class34})}\\
T_6&=\raisebox{-1mm}{\includegraphics{even_classes/small_class-25-4}}&\text{(Class \ref{class25})}\\
T_8&=\raisebox{-1mm}{\includegraphics{even_classes/small_class-35-1}}&\text{(Class \ref{class35})}
\end{align*}
Note that all these tilesets contain~$T_1$ and are contained in~$T_8$.

Figure \ref{fig_irr_stairs_not_SI} illustrates that~$X_{T_8}$ is not strongly irreducible. The only tile having a black left edge and a white upper edge is \smalltile{1010}, therefore a diagonal made of this tile forces the lower triangle to be filled with the same tile.
\begin{figure}[!ht]
\centering
\includegraphics{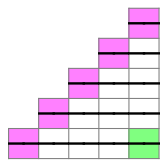}
\caption[Not strongly irreducible]{The subshifts are not strongly irreducible: the \purple{} diagonal and the \green{} cell are not independent.}\label{fig_irr_stairs_not_SI}
\end{figure}

This lack of strong irreducibility can be turned into ramifications, illustrated in Figure \ref{fig_irr_stairs_obstruction}. These ramifications only use the tiles from~$T_1$, so all the tilesets are ramified by Remark \ref{rmk_obs_sub}.
\begin{figure}[!ht]
\centering
\includegraphics{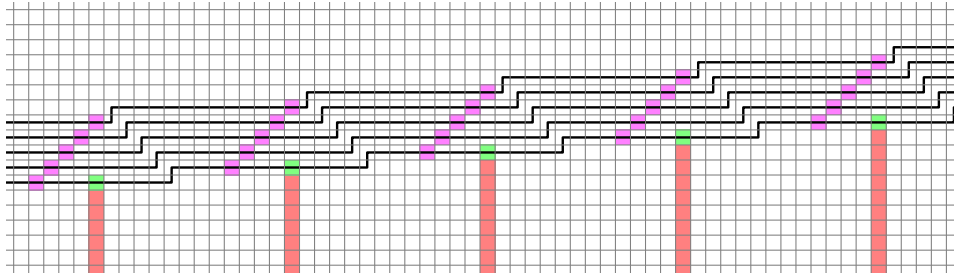}
\caption[ramification]{A ramification: the content of a \red\ cell cannot be grafted in a \green\ cell without changing the content of the corresponding \purple\ region. The distance~$r$ between the \purple\ and \green\ regions can be made arbitrary large by increasing the number of stairs. The vector~$v$ between two consecutive \green\ cells can take value~$(n,1)$ for any sufficiently large~$n$. One has~$u=(0,-1)$ and~$F=\{(0,0)\}$}\label{fig_irr_stairs_obstruction}
\end{figure}


We finally show that all these tilesets induce mixing subshifts. We need two different arguments, depending on whether the tilesets contain \smalltile{0101}, presented as Lemmas \ref{lem_stairs_mixing} and \ref{lem_stairs_mixing1} below.

\begin{lemma}\label{lem_stairs_mixing}
The tilesets $T_1=$~\raisebox{-1mm}{\includegraphics{even_classes/small_class-14-1}} and $T_3=$~\raisebox{-1mm}{\includegraphics{even_classes/small_class-15-1}} induce mixing subshifts.
\end{lemma}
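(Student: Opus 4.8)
The plan is to reuse the strategy of the $T_4$/$T_7$ case (Lemma~\ref{lemma_extending_pi} and the paragraph after it): extend any valid pattern on a square to a valid pattern on a larger region whose only non-white content is a controlled family of wires escaping to infinity, and fill everything else with the white tile \smalltile{0000}, which both $T_1$ and $T_3$ contain. The goal is the definition of mixing: given $m$, produce $n$ such that any two squares $F,G$ of side $m$ with $d(F,G)\geq n$ are independent.

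The structural fact I would establish first is that, because $T_1$ lacks both the vertical wire \smalltile{0101} and the corner \smalltile{1001}, every wire is a monotone staircase of positive slope: inspecting the two black edges of each admissible tile shows that a wire can only advance rightward or upward (through \smalltile{1010}, \smalltile{1100}, \smalltile{0011}), never down-right nor up-left, so no wire can turn back and no closed loop exists. For $T_3$ the extra tiles beyond $T_1$ add no new turning direction---in particular the crossing tile \smalltile{1111} merely lets two positive-slope wires cross---so the same description holds up to crossings. Consequently each wire crosses a valid pattern as a positive-slope staircase entering from the left/bottom side and leaving from the top/right side; outside the pattern I would continue it as a bi-infinite positive-slope staircase running to $+\infty$ up-right and to $-\infty$ down-left, and surrounding each staircase by \smalltile{0000} is valid since every staircase cell is white on both of its non-wire edges.

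For independence of two far-apart squares $F$ and $G$, I would route their escaping staircases so the two wire systems never meet, then fill the rest with \smalltile{0000}. Each wire leaving $F$ has adjustable slope in $[0,1]$ (at every step one may continue right with \smalltile{1010} or climb one unit with a \smalltile{1100}--\smalltile{0011} step), so when $G$ lies in the up-right cone of a wire of $F$ I can either keep that wire low so it passes below $G$, or give it slope~$1$ at the start so it clears the top of $G$ before entering $G$'s columns; since $d(F,G)\geq n$ the horizontal room exceeds the side $m$ of $G$, which makes this possible. The down-left ends run to $-\infty$ and are treated symmetrically, and the roles of $F$ and $G$ are swapped for $G$'s wires; taking $n$ of order $m$ should let the finitely many staircases issuing from $F$ and from $G$ be spaced so as never to cross.

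The main obstacle is precisely this collision-free routing, the delicate case being when $G$ sits in the up-right cone of $F$: then the up-right staircases leaving $F$ and the down-left staircases entering $G$ are confined to one diagonal corridor, and since neither \smalltile{0101} nor \smalltile{1001} is available none of them can be bent sideways or turned back---only their slope in $[0,1]$ can be tuned. Proving that the large gap $n$ nonetheless allows all of them to be interleaved without intersection is the crux, and it is what forces an argument different from the $T_4$/$T_7$ case, where the full set of corners let every wire be capped so that the boundary could be made entirely white.
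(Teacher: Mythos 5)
Your proposal has a genuine gap, and you name it yourself: the collision-free routing of individual wires in the diagonal corridor is exactly the point where all the work lies, and you leave it unproved. The paper avoids this interleaving problem entirely by never letting the wires of the two squares share a corridor. It makes a case split on the relative position of $S_n$ and $p+S_n$ with $p=(i,j)$, $|p|\geq 7n$: if $|j|\geq 3n$, each valid square pattern is extended to a valid pattern on a \emph{horizontal strip} ($\Z\times[-n,2n-1]$, resp.\ $\Z\times[j-n,j+2n-1]$) whose upper and lower boundaries are entirely white; if $|j|<3n$, then $|i-j|>4n$ and each pattern is extended to a \emph{diagonal strip} $\{(k,l):|k-l|\leq 2n\}$, resp.\ its translate, again with white boundaries. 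In both cases the two strips are disjoint and the rest of the plane is filled with the white tile, so no wire of one square ever has to be threaded past a wire of the other. Note in particular that your ``delicate case'' ($G$ in the up-right cone of $F$) is precisely the case of large vertical offset, which the paper handles with two horizontal strips at different heights --- the wires escape sideways inside their own strip rather than running up-right toward the other square. The combinatorial fact making the strip extension possible is the one your plan only implicitly uses: the boundary of a valid pattern cannot carry two consecutive black horizontal edges (no pair of matching tiles can sit above them), so stairs can be grown independently out of each black boundary edge and then flattened into parallel escapes within the strip.

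There is also a factual error in your structural analysis of $T_3$: its fifth tile is \emph{not} the crossing tile \smalltile{1111}. $T_3$ (Class 7.5.5, ``three corners + two even tiles'') is $T_1$ together with the third corner \smalltile{0110} --- this is why the paper remarks that its extensions avoid \smalltile{0110} and hence apply to both tilesets. With a third corner available, wires \emph{inside} a valid $T_3$-pattern can make turns unavailable in $T_1$, so your claimed lemma that every wire traverses the pattern as a monotone positive-slope staircase fails for $T_3$, and with it the premise of your per-wire continuation scheme. What survives, and what the paper actually exploits, is the weaker boundary property above together with the freedom to use only $T_1$-tiles outside the given pattern. Even granting a corrected structural lemma for $T_1$, your routing step would still need a genuine argument (e.g.\ checking that the vertical orders of the escaping wires at the two ends of the corridor are consistent, so that parallel staircases can realize them without crossing); as written, the proposal stops exactly where the proof should begin.
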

\begin{proof}
Let~$S_n=[0,n-1]^2$. We show that if~$|p|\geq 7n$, then~$S_n$ and~$p+S_n$ are independent. Let~$p=(i,j)$, there are two cases: either~$|j|\geq 3n$ or~$|j|<3n$. The argument is illustrated in Figure \ref{fig_irr_stairs_mixing}.

\begin{figure}[!ht]
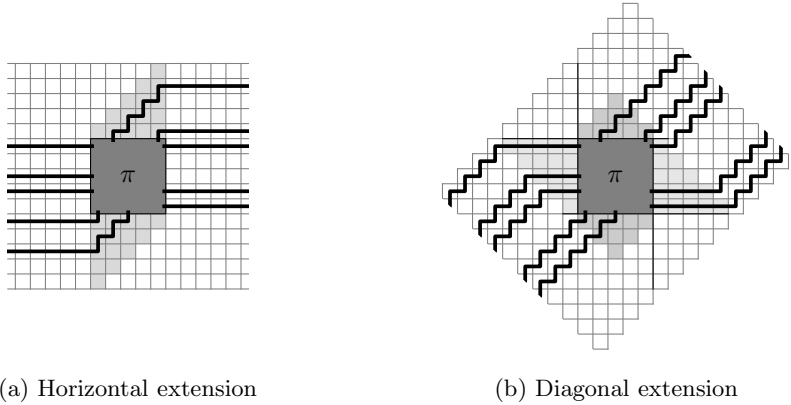

\centering
\hspace{\stretch{1}}
\subfloat[Horizontal extension]{\includegraphics{Widgets/widget-3}\label{fig_hori_extension}}
\hspace{\stretch{1}}
\subfloat[Diagonal extension]{\includegraphics{Widgets/widget-4}\label{fig_diag_extension}}
\hspace{\stretch{1}}
\caption[Mixing]{Extending a pattern~$\pi$ from a square to an infinite strip with white boundaries}\label{fig_irr_stairs_mixing}
\end{figure}

First assume that~$|j|\geq 3n$, i.e.~$p+S_n$ is vertically far from~$S_n$. Any valid~$S_n$-pattern~$\pi$ can be extended to a valid pattern on the horizontal strip~$\Z\times [-n,2n-1]$ whose upper and lower boundaries are white (see Figure \ref{fig_hori_extension}). An important point is that the two tilesets considered here do not allow two adjacent horizontal edges to be black, because there is no way of placing matching tiles above them. Therefore, the upper and lower boundaries of~$\pi$ cannot contain two consecutive black edges, which enables one to make stairs grow out of these boundaries. Therefore, any valid pattern~$\pi'$ on~$p+S_n$ can be extended to a valid pattern on~$\Z\times [j-n,j+2n-1]$ in a similar way. As~$|j|\geq 3n$, these strips are disjoint and we can fill the other cells with the white tile, yielding a valid configuration extending~$\pi\cup\pi'$, so~$S_n$ and~$p+S_n$ are independent.

Now assume that~$|j|<3n$, which implies~$|i-j|\geq |i|-|j|>4n$. Any valid~$S_n$-pattern~$\pi$ can be extended to a valid pattern on the diagonal strip~$D=\{(k,l):|k-l|\leq 2n\}$ with white boundaries (see Figure \ref{fig_diag_extension}). Therefore, any valid pattern~$\pi'$ on~$p+S_n$ can be extended to a valid pattern on~$p+D=\{(k,l):|k-i-l+j|\leq 2n\}$. The strips~$D$ and~$p+D$ are disjoint, because~$4n<|i-j|\leq |(k-i)-(l-j)|+|k-l|$, so~$(k,l)$ cannot belong both to~$D$ and~$p+D$. Again, one can fill all the other cells with the white tile, yielding a valid extension of~$\pi\cup\pi'$, so~$S_n$ and~$p+S_n$ are independent.

Note that the extensions do not use the tile \smalltile{0110}, so they apply to both tilesets.
\end{proof}

We now give an argument for the tilesets containing \smalltile{0101}.
\begin{lemma}\label{lem_stairs_mixing1}
If a bicolor tileset contains~$T_2=$~\raisebox{-1mm}{\includegraphics{even_classes/small_class-23-4}}, then it induces a mixing subshift.
\end{lemma}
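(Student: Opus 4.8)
The plan is to follow the same scheme as Lemma~\ref{lem_stairs_mixing}: I would prove that $S_n$ and $p+S_n$ are independent whenever $|p|\ge 3n$, which yields mixing. Everything reduces to one extension statement: every valid $S_n$-pattern $\pi$ extends to a valid pattern on the horizontal strip $\Z\times[-n,2n-1]$ whose top and bottom boundaries are entirely white, using only the five tiles of $T_2$. Granting this, since $T_2$ is invariant under reflection across the main diagonal, the transposed construction extends $\pi$ to a vertical strip $[-n,2n-1]\times\Z$ with white left and right boundaries, again using only tiles of $T_2$. Writing $p=(i,j)$ and recalling $|p|=\max(|i|,|j|)$, if $|j|\ge 3n$ I extend $\pi$ and $\pi'$ to two disjoint horizontal strips, and if $|i|\ge 3n$ to two disjoint vertical strips; in either case filling the complement with \smalltile{0000} produces a valid configuration containing both patterns, so the regions are independent.

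The new ingredient, compared with Lemma~\ref{lem_stairs_mixing}, is that $T_2$ contains the vertical wire \smalltile{0101}, so the black edges along the boundary of $\pi$ need not be isolated and the ``growing stairs'' argument is no longer available. Instead I would continue each black edge of the top boundary straight upward with \smalltile{0101}, then turn it eastward with \smalltile{0011} and let it escape to $+\infty$ along a horizontal \smalltile{1010}-ray; symmetrically, each black edge of the bottom boundary is continued downward, turned westward with \smalltile{1100}, and sent to $-\infty$. Black edges on the left and right boundaries simply continue as horizontal \smalltile{1010}-wires to $\pm\infty$. Since every routed wire eventually leaves the picture along a horizontal ray, none of them ever reaches the top or bottom edge of the strip, and the two boundaries stay white as required.

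The hard part, and the reason the routing is forced into exactly this shape, is that among the tiles of $T_2$ the only corners are \smalltile{1100} and \smalltile{0011}, so the sole available turns are $W\!\leftrightarrow\! N$ and $E\!\leftrightarrow\! S$; in particular an upward wire may only bend east and a downward wire only west. I would therefore have to check that the resulting rays can be made pairwise disjoint, which I would arrange by giving the upward wires strictly monotone turning heights ordered by column (the leftmost turning highest) and the downward wires strictly monotone depths (the leftmost turning shallowest); a direct inspection then shows that no vertical segment of one wire crosses the horizontal ray of another, and that at most $n$ wires on each side fit with turning heights inside $[n,2n-1]$. The remaining points — that every local tile used belongs to $T_2$, that the construction is insensitive to whatever extra tiles the ambient tileset may contain (since $\pi$ is kept unchanged and only its exterior is filled), and that the two strips are genuinely disjoint once $|p|\ge 3n$ — are then routine.
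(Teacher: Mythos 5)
Your proposal is correct and takes essentially the same approach as the paper: the same bound $|p|\geq 3n$, the same case split into disjoint horizontal or vertical strips using the diagonal symmetry of~$T_2$, and the same extension of a valid $S_n$-pattern to an infinite strip with white boundaries by routing each boundary wire straight out and bending it with the two available opposite corners at pairwise distinct, monotonically ordered heights before sending it to infinity along a horizontal ray, then filling the complement with white tiles. The only difference is presentational: the paper performs the extension in two steps (first to $[0,n-1]\times[-n,2n-1]$ with white top and bottom, then to the full strip $\Z\times[-n,2n-1]$) and delegates the routing details to Figure~\ref{fig_hori_extension_bis}, whereas you spell out the turning-height bookkeeping explicitly.
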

\begin{proof}
Let again~$S_n=[0,n-1]^2$. We show that if~$|p|\geq 3n$, then~$S_n$ and~$S_n$ are independent. Let~$p=(i,j)$, there are two cases:~$|i|\geq 3n$ or~$|j|\geq 3n$. The argument is illustrated in Figure \ref{fig_irr_stairs_mixing1}.

\begin{figure}[!ht]
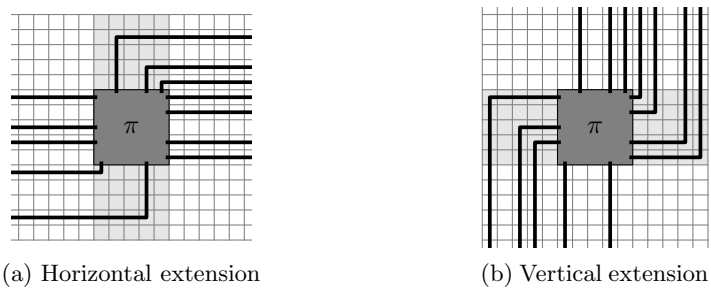

\centering
\hspace{\stretch{1}}
\subfloat[Horizontal extension]{\includegraphics{Widgets/widget-1}\label{fig_hori_extension_bis}}
\hspace{\stretch{1}}
\subfloat[Vertical extension]{\includegraphics{Widgets/widget-2}\label{fig_vert_extension}}
\hspace{\stretch{1}}
\caption[Mixing]{Extending a pattern $\pi$ from a square to an infinite strip with white boundaries}\label{fig_irr_stairs_mixing1}
\end{figure}

First assume that~$|j|\geq 3n$, i.e.~that~$p+S_n$ is vertically far from~$S_n$. A valid~$S_n$-pattern~$\pi$ can be first extended using~$T_2$ to a valid pattern on~$[0,n-1]\times [-n,2n-1]$-pattern whose upper and lower boundaries are white, and then extended using~$T_2$ to a valid pattern on the horizontal strip~$\Z\times [-n,2n-1]$ whose upper and lower boundaries are white (see Figure \ref{fig_hori_extension_bis}. Therefore,~$S_n$ and~$p+S_n$ are independent because valid patterns on these squares can be extended to valid patterns on two disjoint horizontal strips, which can be jointly extended by using the white tile on the other cells.

Now assume that~$|i|\geq 3n$, i.e.~$p+S_n$ is horizontally far from~$S_n$. A symmetric construction can be made to extend a valid~$S_n$-pattern to a valid pattern on the vertical strip~$[-n,2n-1]\times \Z$ with white left and right boundaries (consider a symmetry across the diagonal and note that it preserves~$T_2$, see Figure \ref{fig_vert_extension}), so the same argument shows that~$S_n$ and~$p+S_n$ are independent.
\end{proof}

\section{Subshifts that do not belong to \texorpdfstring{$\nar$}{nar}}\label{sec_transitions}

The main technique to prove that a subshift does not belong to~$\nar$ is Theorem \myref{thm_notin_nar} from \cite{FH24}. It involves~$\Zb$, which is the one-dimensional SFT on the alphabet~$\{\raisebox{-.5mm}{\includegraphics{Tiles/tiles-7}},\raisebox{-.5mm}{\includegraphics{Tiles/tiles-8}}\}$ with forbidden pattern \raisebox{-.5mm}{\includegraphics{Tiles/tiles-6}}.

\begin{theorem}[An obstruction to being in $\nar$]\label{thm_notin_nar}
Let~$X$ be a~$\Z^d$-subshift. If~$X$ is weakly mixing and~$\Zb$ is a weak factor of~$X$, then~$X\notin \nar$.
\end{theorem}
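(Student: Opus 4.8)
The plan is to assume $X\in\nar$ and derive a contradiction, using the weak factor onto $\Zb$ to manufacture a \emph{monotone binary observable} whose level sets the narrow structure cannot reproduce. Write $X=\bigcup_{n}\im{f_n}$ with each $f_n\colon A_n^{\N}\to X$ being $r_n$-narrow. Each $\im{f_n}$ is compact, hence closed, so by the Baire category theorem applied to the compact metric space $X$ some $\im{f_N}$ has nonempty interior and therefore contains a basic clopen set $[\rho]\cap X$ for a valid finite pattern $\rho$. From now on I would work with this single $r$-narrow map $f:=f_N$, writing $A:=A_N$ and $r:=r_N$; the point of the reduction is that a single narrow function carries a single uniform width and a single input alphabet.

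Next I would unpack the weak factor. Let $\Phi:X\to\Zb$ be the factor map and $\varphi:\Z\to\Z^d$ the homomorphism, and set $v:=\varphi(1)$. Writing $\phi_0(x):=\Phi(x)(0)$, the intertwining relation gives $\Phi(x)(k)=\phi_0(\sigma^{kv}x)$, and since $\phi_0:X\to\{\text{white},\text{black}\}$ is continuous it is locally constant, hence determined by the restriction of $x$ to a fixed finite window $W$. Because every element of $\Zb$ is monotone (white on an initial segment, black afterwards), the sequence $k\mapsto\phi_0(\sigma^{kv}x)$ is non-decreasing, with a single transition $\tau(x)\in\Zb\cong\Z\cup\{\pm\infty\}$. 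I would then define $W_k:=\{a\in A^{\N}:\phi_0(\sigma^{kv}f(a))=\text{white}\}=f^{-1}(\{x:\tau(x)>k\})$. Two features are crucial: each $W_k$ is clopen, and because reading $\phi_0(\sigma^{kv}f(a))$ only inspects $f(a)$ on $kv+W$ while each such cell depends on at most $r$ input coordinates, $W_k$ depends on at most $s:=r\,|W|$ coordinates, a bound \emph{uniform in $k$}; moreover monotonicity gives $W_0\supseteq W_1\supseteq\cdots$.

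The heart of the argument is then a measure computation. Endow $A^{\N}$ with the uniform Bernoulli product measure $\mu$. Since $W_k$ depends on at most $s$ coordinates, $\mu(W_k)$ is an integer multiple of $|A|^{-s}$, so the non-increasing sequence $(\mu(W_k))_k$ takes values in the \emph{finite} set $\{0,|A|^{-s},\ldots,1\}$ and can strictly decrease at most $|A|^{s}$ times. On the other hand $W_k\setminus W_{k+1}=f^{-1}(\{x:\tau(x)=k+1\})$ is clopen, of positive $\mu$-measure whenever it is nonempty, so a strict drop $\mu(W_k)>\mu(W_{k+1})$ occurs exactly when the transition value $k+1$ is realised inside $\im{f}\supseteq[\rho]\cap X$. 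Thus it suffices to show that infinitely many finite transition values are realised within the cylinder $[\rho]\cap X$: this contradicts the bound of $|A|^{s}$ strict drops and finishes the proof.

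I expect this last point, that $\tau$ takes infinitely many finite values on $[\rho]\cap X$, to be the main obstacle, and it is exactly where weak mixing is indispensable (note that $\Zb$ itself lies in $\nar$, so the conclusion must fail without it). The difficulty is that the factor pins the distinguished direction $v$, whereas weak mixing only supplies independence of two boxes at \emph{some} a priori uncontrolled displacement per scale. The plan is to upgrade this using the classical fact that weak mixing makes every power transitive, equivalently that each hitting-time set $\{p:\sigma^p[\pi]\cap[\pi']\neq\emptyset\}$ is thick; thickness forces admissible displacements to have arbitrarily large $v$-component, so the constraints $\rho$ imposes near the origin cannot force the value of $\phi_0$ at the far windows $kv+W$. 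Concretely I would start from a configuration in $[\rho]\cap X$ whose $v$-line is white far out, then use such an independence to plant a black arbitrarily far along that line, producing configurations in $[\rho]\cap X$ with arbitrarily large finite transition; since $\tau$ is integer-valued this yields infinitely many distinct realised values, delivering the contradiction.
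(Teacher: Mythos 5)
The paper itself does not prove this theorem: it is quoted from the companion paper \cite{FH24} (Theorem 6.1 there), so there is no in-text proof to compare against, and your proposal has to be judged on its own terms. Most of it is sound. The Baire reduction to a single $r$-narrow $f$ with $[\rho]\cap X\subseteq\im{f}$ is correct; so is the extraction of the window $W$ for $\phi_0$, the uniform bound $s=r|W|$ on the number of input coordinates determining $W_k$, the monotonicity $W_{k+1}\subseteq W_k$ coming from the structure of $\Zb$, and the Bernoulli-measure drop count showing that at most $|A|^s$ finite transition values are realized on $\im{f}$. (Two small points to record: $v=\varphi(1)\neq 0$ is forced by surjectivity of $\Phi$, and a cylinder $[\beta]$ \emph{forcing} transition $0$ exists because $\phi_0$ is determined by $W$, so one may restrict any transition-$0$ configuration to $(-v+W)\cup W$.) This correctly reduces everything to: infinitely many finite values of $\tau$ are realized inside $[\rho]\cap X$.

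Your last step, however, fails as written, and you have correctly located the danger without defusing it. The value $\tau(\sigma^q x)$ is related to $\tau(x)$ \emph{only} when $q\in\varphi(\Z)=\Z v$; for off-line $q$ the two are unrelated. Thickness of the hitting set $N([\rho],[\beta])$ gives translates $t+\{0,v,\dots,kv\}$ of arbitrarily long $v$-progressions inside $N$ with an uncontrolled base point $t$; a thick subset of $\Z^d$ need not contain a single nonzero point of $\Z v$ (place the $n$-cubes at distance $\gg n$ from the line), so ``arbitrarily large $v$-component'' buys nothing, and both sub-claims of your concrete plan are unjustified: nothing guarantees a point of $[\rho]\cap X$ whose $v$-line is white far out (a priori all of $[\rho]\cap X$ could have $\tau\leq 17$), and you cannot plant a black at a position of the form $mv$ itself. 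The gap is genuinely repairable with your own ingredients, by re-centering the observable rather than the displacement: for each fixed $t\in\Z^d$, your measure computation applies verbatim to $W^{(t)}_k:=f^{-1}\{x:\Phi(\sigma^t x)(k)=\text{white}\}$, which again depends on at most $s=r|W|$ input coordinates, so at most $|A|^s$ finite values of $x\mapsto\tau(\sigma^t x)$ are realized on $\im{f}$, \emph{uniformly in $t$}. Now take $k=|A|^s+1$ and apply weak mixing in the paper's independence form, placing $\dom(\rho)$ in $Q_n$ and the whole progression $t+jv+\dom(\beta)$, $0\leq j\leq k$, in the far cube $p+Q_n$ (so no Furstenberg machinery is even needed): this produces configurations $x_j\in[\rho]\cap X$ containing $\beta$ at $t+jv$, whence $\tau(\sigma^t x_j)=j$ for $k+1$ distinct values of $j$, contradicting the uniform bound for that single $t$. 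Without this re-centering twist, the argument as you stated it does not close.
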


Among the $36$ equivalence classes of even bicolor tilesets, there are exactly $6$ classes to which Theorem \ref{thm_notin_nar} can be applied, and therefore do not belong to~$\nar$. There is a 7th even bicolor tileset to which Theorem \ref{thm_notin_nar} cannot be applied because it is not weakly mixing, but which is not in~$\nar$ by a similar argument (see below).

In Theorem \ref{thm_wires_notC0} we identified tilesets that do not belong to~$\gen$. We expect that they do not belong to~$\nar$ either, but have not been able to prove it so far. The technique used in this section, namely Theorem \ref{thm_notin_nar}, cannot be applied to them because they are all mixing and Proposition \myref{prop_weak_factor_mixing} in \cite{FH24} prevents~$\Zb$ from being a weak factor of a mixing subshift, because~$\Zb$ is neither periodic nor mixing.

\begin{theorem}\label{thm_tilesets_nar}
The subshifts induced by the following tilesets are weakly mixing and weak factor to~$\Zb$; therefore, they do not belong to~$\nar$:
\begin{align*}
T_1&=\raisebox{-1mm}{\includegraphics{even_classes/small_class-19-4}}&\text{(Class \ref{class_irr_chevron})}\\
T_2&=\raisebox{-1mm}{\includegraphics{even_classes/small_class-9-1}}&\text{(Class \ref{class_histograms})}\\
T_3&=\raisebox{-1mm}{\includegraphics{even_classes/small_class-32-1}}&\text{(Class \ref{class_crossing_brackets})}\\
T_4&=\raisebox{-1mm}{\includegraphics{even_classes/small_class-21-8}}&\text{(Class \ref{class_staples_gaps})}\\
T_5&=\raisebox{-1mm}{\includegraphics{even_classes/small_class-24-1}}&\text{(Class \ref{class_slanted_histograms})}\\
T_6&=\raisebox{-1mm}{\includegraphics{even_classes/small_class-33-2}}&\text{(Class \ref{class_overlapping_staples})}
\end{align*}

The subshift induced by the following tileset does not belong to~$\nar$:
\begin{align*}
T_7&=\raisebox{-1mm}{\includegraphics{even_classes/small_class-20-6}}\hspace{5.5mm}&\hspace{7.5mm}\text{(Class \ref{class_nested_staples})}
\end{align*}
\end{theorem}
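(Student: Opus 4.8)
The plan is to treat the six tilesets $T_1,\dots,T_6$ uniformly and reduce each to the two hypotheses of Theorem \ref{thm_notin_nar}: that $X_{T_i}$ is weakly mixing, and that $\Zb$ is a weak factor of $X_{T_i}$. Once both are established for a given $T_i$, the conclusion $X_{T_i}\notin\nar$ is immediate. I would organise the argument around one generic construction of the weak factor map and one generic weak-mixing argument, and then indicate the minor adjustments tileset by tileset, exactly as the mixing lemmas of Section~\ref{sec_obstructions} were stated once and reused.

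For the weak factor onto $\Zb$, the idea is to read off, along a suitable lattice line, the presence or absence of a wire on one distinguished edge of each visited cell. Concretely, I would fix a step vector $v\in\Z^2$ transverse to the dominant wire direction of the tileset and define $f\colon X_{T_i}\to\Zb$ by letting the $k$-th symbol of $f(x)$ be black or white according to the colour of that edge in the cell at position $kv$; the accompanying homomorphism is $\varphi\colon\Z\to\Z^2$ with $\varphi(1)=v$. Continuity and the equivariance $p\cdot f(x)=f(\varphi(p)\cdot x)$ are automatic from this local reading rule, so the two substantive checks are (i) that the image lies in $\Zb$, i.e.\ that the tileset forbids two adjacent black edges along direction $v$ — a finite inspection of which tiles can be adjacent in that direction — and (ii) surjectivity, i.e.\ that every $\Zb$-sequence is realised by a valid configuration, which I would obtain by the same ``complete the boundary with white tiles'' extensions used in Lemmas~\ref{lem_stairs_mixing} and~\ref{lem_stairs_mixing1}. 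Weak mixing is the weaker of the two hypotheses: for each $n$ it suffices to exhibit a single displacement $p$ making $Q_n$ and $p+Q_n$ independent, which the boundary-whitening constructions provide (several of these tilesets will in fact be outright mixing by arguments paralleling Section~\ref{sec_obstructions}). The genuinely creative step is the choice of line and distinguished edge, together with the verification that the $\Zb$ forbidden pattern is exactly the one enforced by the tileset.

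The tileset $T_7$ is the delicate case, precisely because it is not weakly mixing, so Theorem \ref{thm_notin_nar} cannot be invoked. The plan is to keep the same weak factor $f\colon X_{T_7}\to\Zb$ and to reprove the conclusion by adapting the \emph{proof} of Theorem \ref{thm_notin_nar} rather than its statement. I would first isolate where weak mixing is actually used in that proof — presumably only to guarantee enough local independence that a bounded-window (narrow) map pushed forward through $f$ cannot reproduce all of $\Zb$ — and then verify a sufficient surrogate directly from the nested-staples geometry of $T_7$, exploiting the freedom that still exists within a half-plane or strip even though global weak mixing fails.

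It is worth noting in the write-up why the obvious shortcuts are unavailable, so that the need for a bespoke argument is clear. The class $\nar$ is closed under weak factors, since a continuous equivariant map between subshifts has uniformly bounded windows by compactness, hence is narrow, and the composition of narrow functions is narrow; thus if $X\in\nar$ and $\Zb$ is a weak factor of $X$ via $g$, writing $X=\bigcup_n\im{f_n}$ with each $f_n$ narrow gives $\Zb=\bigcup_n\im{g\circ f_n}$. But this alone cannot yield $X_{T_7}\notin\nar$ — otherwise the weak-mixing hypothesis of Theorem \ref{thm_notin_nar} would be redundant — and $\nar$ is not known to be closed under passing to subsystems, so one cannot simply restrict to a weakly mixing subshift of $X_{T_7}$. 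Pinning down the exact role of weak mixing in Theorem \ref{thm_notin_nar} and checking that $X_{T_7}$ meets a weakened version of it is where I expect the main difficulty to lie.
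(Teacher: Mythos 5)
For $T_1$--$T_6$ your plan is essentially the paper's: a factor map reading one row (for $T_5$, the diagonal) and coding tiles white/black, with the homomorphism $\varphi(p)=(p,0)$ (resp.\ $(p,p)$), the image check by local inspection, surjectivity via an explicit valid configuration whose image has dense orbit in $X_{\Zb}$, and weak mixing via white-boundary extensions of square patterns (the paper handles $T_1,T_3,T_4,T_6$ at once since they all contain $T_1$, and gives half-plane arguments for $T_2$ and $T_5$). One small slip in your check (i): the forbidden pattern of $\Zb$ is not two adjacent blacks (the all-black configuration lies in $\Zb$); what must be verified is that a black-coded tile cannot sit immediately to the left of a white-coded one, which is the paper's inspection that the only tiles placeable left of a white-coded tile are themselves white-coded.

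The genuine gap is $T_7$. Your negative observations are sound --- weak-factoring to $\Zb$ alone proves nothing, since $\Zb$ is a countable subshift and hence lies in $\gen\subseteq\nar$, and one cannot pass to a weakly mixing subsystem --- but your positive plan, namely to reopen the proof of Theorem~\ref{thm_notin_nar} and find a surrogate for weak mixing, is left entirely unexecuted, and that is exactly where the content of this case lies. The paper never touches the proof of Theorem~\ref{thm_notin_nar}; instead it interposes a third subshift. Concretely, $X_{T_7}$ is conjugate to the zigzag subshift $Z$ whose configurations are boundaries of sequences $(n_j)_{j\in\Z}$ with $|n_{j+1}-n_j|=1$, and $Z$ weak factors onto the subshift $Z_0$ defined by the relaxed condition $|n_{j+1}-n_j|\leq 1$, via $n'_j=\floor{\tfrac{n_{2j}}{2}}$ with homomorphism $\varphi(p)=2p$. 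The relaxation destroys the parity obstruction: $Z_0$ \emph{is} weakly mixing (any two distant squares can be joined because $|b-a|\leq m$ allows an interpolating sequence with steps in $\{-1,0,1\}$), and $Z_0$ still weak factors onto $\Zb$ by extracting the first row. Theorem~\ref{thm_notin_nar} applied to $Z_0$ gives $Z_0\notin\nar$, and then the closure property you yourself proved --- $\nar$ is closed under weak factor maps, since such maps are narrow and compositions of narrow maps are narrow --- applied to the map $X_{T_7}\to Z_0$ rather than to a map into $\Zb$, yields $X_{T_7}\notin\nar$. So the missing idea is to push forward onto a weakly mixing \emph{quotient} and use the theorem as a black box there; the tool you already isolated suffices, and no weakening of the weak-mixing hypothesis is needed.
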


The proof is separated in several statements.
\begin{lemma}\label{lem_weak_factor_zbar}
The subshifts induced by tilesets $T_1$ to~$T_7$ weak factor to~$\Zb$.
\end{lemma}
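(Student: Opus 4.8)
The plan is to realize $\Zb$ as a weak factor of each $X_{T_i}$ by a single local "line reading" map, exploiting the bracket/staple structure common to these tilesets. By the definition of weak factor I must produce a continuous surjection $f\colon X_{T_i}\to\Zb$ together with a homomorphism $\varphi\colon\Z\to\Z^2$ satisfying $p\cdot f(x)=f(\varphi(p)\cdot x)$. I would fix $\varphi(1)$ to be a lattice direction — horizontal $(1,0)$, or vertical $(0,1)$ after transposition for the tilesets whose wires run the other way — and define $f(x)(k)$ by reading a single cell of $x$: set $f(x)(k)=\blacktile$ when the cell at position $\varphi(k)=k\,\varphi(1)$ is of a distinguished type (the vertical segment or staple tile of $T_i$), and $f(x)(k)=\whitetile$ otherwise. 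Since $f(x)(k)$ depends on a single coordinate of $x$, the map $f$ is continuous, and from the shift convention $p\cdot x=y,\ y(q)=x(p+q)$ one gets $f(\varphi(1)\cdot x)(k)=f(x)(k+1)=(1\cdot f(x))(k)$, so equivariance is automatic.

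Next I would verify $\im{f}\subseteq\Zb$, i.e.\ that the forbidden pattern $\blacktile\blacktile$ never appears in $f(x)$. This is a two-cell local check: it amounts to showing that $T_i$ does not admit two distinguished cells adjacent in the direction $\varphi(1)$. The bracket/staple geometry is exactly what rules this out — two staples cannot sit side by side without a separating white column — so I expect a quick inspection per tileset, with $\varphi(1)$ chosen precisely so that the relevant adjacency is impossible.

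The substantial step is surjectivity. Given $z\in\Zb$, I must build $x\in X_{T_i}$ with $f(x)=z$. I would place a distinguished feature (a staple or vertical segment of $T_i$) along the line at each $k$ with $z(k)=\blacktile$; the hypothesis that $z$ has no two adjacent black symbols is precisely what guarantees a white column between consecutive features, leaving room to insert them and close them off, while the rest of the plane is filled with the background tile \smalltile{0000}. To make this rigorous I would reuse the boundary-controlled extension technique already developed for the mixing proofs (Lemmas \ref{lem_stairs_mixing} and \ref{lem_stairs_mixing1}), which complete local patterns to white-bounded regions that can then be merged.

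The main obstacle is uniformity across the seven tilesets, together with turning the informal "grow a staple" filling into a precise construction. I would first try to isolate, in the spirit of Remark \ref{rmk_obs_sub}, a common core sub-tileset on which $f$ and both verifications are defined, and then argue that the extra tiles of each $T_i$ neither create an adjacent pair of distinguished cells nor obstruct the completion; if a fully uniform treatment is not available, the two verification steps would be carried out tileset by tileset. I expect $T_7$ (the nested staples, singled out in Theorem \ref{thm_tilesets_nar} as the one that is not even weakly mixing) to require the most care, since its reading map and its filling cannot borrow the weak-mixing slack available for the other six.
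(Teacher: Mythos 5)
There is a genuine gap, and it is at the very foundation of your plan: you have misidentified the subshift~$\Zb$. You treat the forbidden pattern as ``two adjacent black symbols'' and accordingly design your reading map to produce isolated black cells and your surjectivity construction to realize arbitrary configurations with no two adjacent blacks. But~$\Zb$ is not the golden mean shift: its forbidden pattern is a black tile immediately to the left of a white tile, so its points are exactly the all-white configuration, the all-black configuration, and the configurations with a single transition from white (on the left) to black (on the right) --- it is the compactification~$\overline{\Z}$. This is forced by the surrounding text: Theorem \ref{thm_notin_nar} is usable precisely because~$\Zb$ is neither periodic nor mixing, whereas the ``no two adjacent blacks'' SFT is mixing, so under your reading the whole obstruction strategy of Section \ref{sec_transitions} would be vacuous. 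Consequently both of your verification steps aim at the wrong target: your containment check (``two staples cannot sit side by side'') establishes a property irrelevant to~$\Zb$, and your map, which outputs isolated blacks, would typically land \emph{outside}~$\Zb$; your surjectivity construction could never produce the all-black configuration or the single-transition configurations with infinitely many consecutive blacks.

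For comparison, the paper's map sends a whole row (for~$T_5$, the diagonal, with~$\varphi(p)=(p,p)$ --- a case your horizontal/vertical dichotomy also misses) to~$\{\whitetile,\blacktile\}^\Z$, mapping \smalltile{0000} and \smalltile{0101} to white and everything else to black. The containment in~$X_{\Zb}$ is then a one-step \emph{monotonicity} check: the only tiles that can sit to the left of \smalltile{0000} or \smalltile{0101} are again one of these two, so white is always preceded by white and the transition pattern black-white never occurs. Surjectivity is not obtained by realizing each~$z\in\Zb$ by hand, but by exhibiting a single valid configuration whose row maps to a one-transition sequence: its orbit under the shift is dense in~$X_{\Zb}$, and since~$f$ is continuous with compact domain, its image is closed and hence all of~$X_{\Zb}$. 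If you want to salvage your proposal, you would need to (i) replace your ``distinguished cell'' indicator by a reading whose white set is left-closed along the chosen line, (ii) redo the two-cell check as the monotonicity property above, and (iii) either construct preimages of the three kinds of points of~$\Zb$ (including the constant ones) or adopt the dense-orbit-plus-compactness argument; your appeal to the white-boundary extension lemmas does not by itself produce the half-infinite black rays these configurations require.
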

\begin{proof}
Let~$T$ be one of those tilesets except~$T_5$. Let~$f:X_T\to \{\whitetile,\blacktile\}^\Z$ extract the first row of a configuration and replace the tiles \smalltile{0000} and \smalltile{0101} by \whitetile\ and the other tiles by \blacktile\ (note that~$T_2$ and~$T_7$ do not contain \smalltile{0101}, in which case only \smalltile{0000} is replaced by \whitetile). Let~$\varphi:\Z\to\Z^2$ be the homomorphism sending~$p$ to~$(p,0)$. One has~$\sigma^p\circ f=f\circ \sigma^{\varphi(p)}$, so~$f$ is a weak factor map. Its image is contained in~$X_{\Zb}$ because the only tiles in~$T$ that can be put at the left of \smalltile{0000} or \smalltile{0101} is one of these two tiles, so in the output of~$f$, the only tile that can be appear on the left of the white tile is the white tile. Finally,~$f$ is surjective because there exists a configuration in~$X_T$ whose first row is:
\begin{center}
\includegraphics{Widgets/widget-5}\hspace{1cm}or\hspace{1cm}\includegraphics{Widgets/widget-8}
\end{center}
and the image of such a configuration has a dense orbit in~$X_{\Zb}$ (the first picture applies to any~$T$ except~$T_7$, to which the second picture applies).

Let now~$T=T_5$. Let~$f:X_T\to  \{\whitetile,\blacktile\}^\Z$ extract the diagonal and replace any non-white tile by \blacktile. Let~$\varphi:\Z\to\Z^2$ send~$p$ to~$(p,p)$. One has~$\sigma^p\circ f=f\circ \sigma^{\varphi(p)}$ so~$f$ is a weak factor map. Its image is contained in~$X_{\Zb}$, because one easily checks that in a configuration~$x\in X_T$, if the white appears at position~$(p,p)$ then it also appears at position~$(p-1,p-1)$, so~$f$ does not produce the forbidden pattern. Finally~$f$ is surjective because~$X_T$ contains the following configuration:
\begin{center}
\includegraphics{Widgets/config-0}
\end{center}
\end{proof}

We now prove that~$T_1,T_3,T_4$ and~$T_6$ induce weakly mixing subshifts, because they contain~$T_1$.
\begin{lemma}
Any bicolor tileset that contains~$T_1=$ \raisebox{-1mm}{\includegraphics{even_classes/small_class-19-4}} induces a weakly mixing subshift.
\end{lemma}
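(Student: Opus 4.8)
The plan is to verify the equivalent characterisation of weak mixing recalled above: for every $n\in\N$ I will exhibit a vector $p\in\Z^2$ for which $Q_n$ and $p+Q_n$ are independent. Since every tileset $T\supseteq T_1$ factors onto $\Zb$ along the horizontal direction by Lemma \ref{lem_weak_factor_zbar}, that direction is obstructed; I therefore look for independence along a transverse direction, and by the asymmetry of the chevron tileset the vertical one is free, so I take $p=(0,j)$ for a suitably large $j$. The overall shape of the argument mirrors the mixing Lemmas \ref{lem_stairs_mixing} and \ref{lem_stairs_mixing1}, the essential difference being that here only one of the two axes is available — which is precisely why the conclusion is weak mixing and not mixing.

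The technical core is an extension lemma, which I would prove using the tiles of $T_1$ alone so that it transfers verbatim to every larger tileset $T$: any valid $T$-pattern on $Q_n$ can be completed to a valid pattern supported on a horizontal strip $\Z\times[-H,H]$, with $H=O(n)$, whose topmost and bottommost horizontal edges are entirely white. The idea is that the corner tiles of $T_1$ let me redirect each vertical black connection leaving the top or bottom of $Q_n$ into a horizontal run, after which it need no longer propagate towards the cap; iterating this over the $O(n)$ rows, exactly as in the stair-growing construction of Lemma \ref{lem_stairs_mixing}, I reach all-white boundary rows. Because every extension tile is drawn from $T_1$, the only information used about the ambient tileset $T$ is the black/white colouring along the top and bottom edges of $Q_n$, and the corners of $T_1$ can absorb any such admissible colouring.

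Granting the extension lemma, I would finish as follows. Given two valid $T$-patterns $\pi,\pi'$ on $Q_n$, I extend each to a strip as above, obtaining $R$ and $R'$ with white boundary rows. I then choose $j$ large enough that $R$ and $(0,j)+R'$ are disjoint, set $p=(0,j)$, and fill every remaining cell with the white tile \smalltile{0000}. The white boundary rows of the two strips match the white filler along every shared edge, so the result is a valid configuration of $X_T$ in which $\pi$ appears at the origin and $\pi'$ appears at $p$; hence $Q_n$ and $p+Q_n$ are independent. As $n$ is arbitrary, $X_T$ is weakly mixing.

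I expect the extension lemma to be the sole genuine obstacle, and within it the delicate point is to confirm that the vertical direction really is unobstructed: one must check that the corners of $T_1$ suffice to cap every admissible top and bottom boundary with white, even though the symmetric horizontal operation is blocked — that blockage being exactly the source of the $\Zb$ factor and of the failure of mixing that follows from Proposition \myref{prop_weak_factor_mixing} in \cite{FH24}. Verifying this reduces to a finite inspection of how consecutive black vertical edges can occur along the boundary of a valid pattern and how the available corners of $T_1$ redirect them, entirely analogous to the explicit boundary-closing constructions carried out in Lemmas \ref{lem_stairs_mixing} and \ref{lem_stairs_mixing1}.
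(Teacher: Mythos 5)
There is a genuine gap, and it sits exactly at the point you flagged as the ``sole genuine obstacle'': the strip-extension lemma is false, and the vertical displacement $p=(0,j)$ can never yield independence. The tileset $T_1$ contains exactly \emph{one} corner tile and no crossing tile, so in any valid configuration every black line is an infinite L-shaped curve consisting of one vertical ray and one horizontal ray emanating from a single corner. A vertical black line can never be terminated nor redirected into a horizontal run: either operation requires a second corner (one carrying black on the opposite vertical edge), which $T_1$ lacks. Consequently, as soon as a pattern on $Q_n$ uses the corner tile, the forced vertical ray must cross the top (or bottom) row of every horizontal strip containing $Q_n$, so no extension to $\Z\times[-H,H]$ with all-white horizontal boundary exists. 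Worse, the vertical direction is not ``free'': if $\pi$ places a corner at $(0,0)$ and $\pi'$ places a corner at $(0,j)$, the ray forced by one corner runs along the whole column through cells that must all be vertical wire tiles, including the cell occupied by the other corner; the two patterns are therefore jointly invalid for \emph{every} $j$, so $Q_n$ and $(0,j)+Q_n$ are never independent in $X_{T_1}$. Your inference ``the $\Zb$ factor obstructs the horizontal axis, hence the vertical axis is available'' is also unsound: extracting a column gives a vertical $\Zb$ factor mirroring Lemma \ref{lem_weak_factor_zbar}, so both axes are obstructed; this is consistent with the subshift being weakly mixing but not mixing.

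The direction that actually works is the diagonal, and this is what the paper's proof uses: any valid $S_n$-pattern is extended to the infinite chevron region $[-n,+\infty)^2\setminus[n,+\infty)^2$ with white boundary --- the two arms of this region point precisely in the two directions (one vertical, one horizontal) in which black rays are forced to escape --- and one takes the displacement $(2n,2n)$, so that the two chevron regions are disjoint and nested, after which the remaining cells are filled with the white tile. Your global scheme (extend each pattern to an infinite region with white boundary, translate until the regions are disjoint, fill the complement with white) is the right skeleton, but it cannot be salvaged with strips or with any axis-parallel displacement: the supporting region must be L-shaped and the displacement diagonal.
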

\begin{proof}
Any valid~$S_n$-pattern can be extended to a valid pattern on~$[-n,+\infty)^2\setminus [n,+\infty)^2$ whose boundary is white, as illustrated in Figure \ref{fig_chevron_extension}.
\begin{figure}[!ht]
\centering
\includegraphics{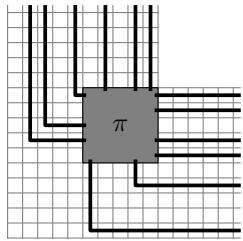}
\caption[Weak mixing]{Extending a pattern~$\pi$ from~$S_n$ to~$[-n,+\infty)^2\setminus [n,+\infty)^2$.}\label{fig_chevron_extension}
\end{figure}
Let~$\pi$ and~$\pi'$ be valid patterns on~$S_n$ and~$(2n,2n)+S_n$ respectively. They can be extended to valid pattern~$\xi$ and~$\xi'$ on the disjoint regions~$[-n,+\infty)^2\setminus [n,+\infty)^2$ and~$[n,+\infty)^2\setminus [3m,+\infty)^2$ respectively, which have white boundaries. The rest of the plane can be filled with white tiles, yielding a valid configuration extending~$\pi$ and~$\pi'$. Therefore,~$S_n$ and~$(2n,2n)+S_n$ are independent.
\end{proof}

We briefly explain why~$T_2$ and~$T_5$ induce weakly mixing subshifts.

For $T_2=$ \raisebox{-1mm}{\includegraphics{even_classes/small_class-9-1}}, we claim that the lower half-plane~$A:=\Z\times (-\infty,0]$ and the upper half-plane~$B:=\Z\times [3,+\infty)$ are independent. Any valid pattern on either of these regions has a boundary which contains at most one black edge. It can be extended to the next row so that the extended pattern has a white boundary. The two extended patterns match and cover~$\Z^2$, yielding a common valid extension of the original patterns. It implies that~$S_m$ and~$(0,m+2)+S_m$ are independent, because they are contained in~$(0,m-1)+A$ and~$(0,m-1)+B$ respectively.

%
%

For $T_5=$ \raisebox{-1mm}{\includegraphics{even_classes/small_class-24-1}}, we claim that the lower-right half-plane~$A:=\{(i,j):j\leq i\}$ and the upper-left half-plane~$B:=\{(i,j):j\geq i+5\}$ are independent. Any valid pattern on~$A$ can be extended to a valid pattern on~$\{(i,j):j\leq i+2\}$ with a white boundary, using only the tiles \smalltile{0000}, \smalltile{0011} and \smalltile{0110}. By symmetry of the tileset and of~$A$ and~$B$ across the diagonal, any valid pattern on~$B$ can be extended to a valid pattern on~$\{(i,j):j\geq i+3\}$ with a white boundary. These two patterns match and cover~$\Z^2$, yielding a common valid extension of the original patterns. It implies that~$S_m$ and~$(0,2m+3)+S_m$ are independent, because they are contained in~$(0,m-1)+A$ and~$(0,m-1)+B$ respectively.

The subshift induced by~$T_7=$ \raisebox{-1mm}{\includegraphics{even_classes/small_class-20-6}} is not weakly mixing. We do not prove it because we do not need it, but the problem is that in a valid configuration, either all the occurrences of \smalltile{0110} appear in even cells and all the occurrences of \smalltile{0011} appear odd cells, or the converse holds (the parity of a cell~$(i,j)$ is~$i+j\mod 2$). Therefore, two different cells are never independent.

However,~$X_{T_7}$ weak factors to a~$\Z^2$-subshift~$Y$ which is weakly mixing, and weak factors to~$\Zb$ as well.

We could prove the result directly with the subshift~$X$ induced by~$T_7$, but it is much easier to understand the argument on the following subshift~$Z$ which is conjugate to~$X$. $Z$ is a subshift on~$\{\whitetile,\blacktile\}$ which has the white configuration, the black configuration and for each sequence~$(n_j)_{j\in\Z}$ satisfying~$|n_{j+1}-n_j|=1$, the configuration whose cell~$(i,j)$ is white if~$i<n_j$ and black otherwise. In other words,~$Z$ is the two-dimensional SFT on~$\{0,1\}$ with forbidden patterns $\pattern{1&0}$, $\pattern{1&1\\0&0}$, $\pattern{0&0\\1&1}$ and $\pattern{0&1\\0&1}$, and a typical configuration is a zigzag separating a black region on the right from a white region on the left. The conjugacy map~$F:Z\to X$ intuitively replaces each black square by a black boundary and shifts the configuration by~$(\frac{1}{2},\frac{1}{2})$. More formally, the rule underlying~$F$ is shown in Figure \ref{fig_rule_T7}.
\begin{figure}[!ht]
\centering
\includegraphics{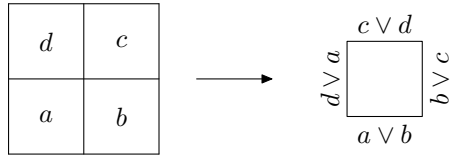}
\caption[Local rule]{A tile in~$T_7$ is determined by the 4 input bits assigned to its corners}\label{fig_rule_T7}
\end{figure}
We let the reader check that this rule is correct, i.e.~only outputs tiles in~$T_7$.

As~$X_{T_7}$,~$Z$ is not weakly mixing. However, it weak factors to a weakly mixing~$\Z^2$-subshift~$Z_0$which itself weak factors to~$\Zb$.

Let~$Z_0$ be obtained as~$Z$, but using sequence~$(n_j)_{j\in\Z}$ satisfying the relaxed condition~$|n_{j+1}-n_j|\leq 1$. The weak factor map~$:Z\to Z_0$ converts a sequence~$(n_j)_{j\in\Z}$ to the sequence~$n'_j=\floor{\frac{n_{2j}}{2}}$, and the homomorphism is~$\varphi(p)=2p$.

As in Lemma \ref{lem_weak_factor_zbar} the function~$g:Z_0\to\Zb$ that extracts the first row is a weak factor map via~$\varphi(p)=(p,0)$.

$Z_0$ is weakly mixing because~$S_m$ and~$(0,2m-1)+S_m$ are independent. If~$\pi$ and~$\xi$ are valid patterns on these two regions and~$a\in [0,m]$ is the first positions of a black tile on row~$m-1$ in~$\pi$ ($a=m$ if there is no black tile) and~$b\in [0,m]$ is the first position of a black tile on row~$2m-1$ of~$\xi$ ($b=m$ if there is no black tile), then we can define a sequence~$n_i$ for~$m-1\leq i\leq 2m-1$ with~$n_{m-1}=a$,~$n{2m-1}=b$ and~$|n_{i+1}-n_i|\leq 1$ for~$m-1\leq i<2m-1$, simply because~$|b-a|\leq m=(2m-1)-(m-1)$. From this sequence we can build a valid extension of~$\pi\cup\xi$.

\section{Subshifts that belong to \texorpdfstring{$\gen$}{L0}}\label{sec_positive}

We now present the even bicolor tilesets that belong to~$\gen$. We only give details for the more difficult cases.  The simpler ones will be briefly described in Section \ref{sec_classification}. For instance, we will see that all the even bicolor tilesets of size at most 3 induce subshifts that belong to~$\gen$ (when they are non-empty).

We first recall Theorem \myref{thm_block_gen} from \cite{FH24}, which will help establishing the results.
\begin{theorem}[\ti{$\gen$ and higher power presentations}]\label{thm_block_gen}
Let~$a\in\Z^d$ have positive coordinates and~$X$ be a~$\Z^d$-subshift. One has
\begin{equation*}
X\in\gen\iff [X]_a\in\gen.
\end{equation*}
\end{theorem}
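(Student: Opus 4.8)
The plan is to prove the two implications separately, in each case by exploiting the explicit description of $[X]_a$ together with the inductive definition of $\gen$. Recall that $[X]_a$ is the recoding of $X$ over the block alphabet $\Sigma^{S_a}$, where $S_a=\prod_{i=1}^d[0,a_i-1]$: one partitions $\Z^d$ into the disjoint translates $(a\odot p)+S_a$, with $a\odot p=(a_1p_1,\dots,a_dp_d)$, and records the content of each block. The key feature is that the unblocking bijection $U\colon[X]_a\to X$ is a homeomorphism satisfying $U\circ\sigma^p=\sigma^{a\odot p}\circ U$, so that $[X]_a$ is $X$ viewed under the finite-index sub-action of $\Lambda:=a\odot\Z^d$, relabelled through the isomorphism $\Z^d\cong\Lambda$. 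In particular $[X]_a$ is a weak factor of $X$ (via $U^{-1}$ and $\varphi(p)=a\odot p$), whereas recovering $X$ from $[X]_a$ requires refining the action, which no ordinary factor map can do; this asymmetry is what makes the two directions unequal in difficulty.

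For the implication $X\in\gen\Rightarrow[X]_a\in\gen$ I would argue that $\mathcal A:=\{X:[X]_a\in\gen\}$ contains the generators and is closed under the two operations defining $\gen$, whence $\gen\subseteq\mathcal A$. On generators this is direct: $[\Sigma^{\Z^d}]_a=(\Sigma^{S_a})^{\Z^d}$ is again a fullshift; $[X_H]_a$ is again in $\gen$, as a direct inspection through $U$ shows; and $U$ being a bijection, $[X]_a$ is countable whenever $X$ is. For the closure operations, $[X\times Y]_a\cong[X]_a\times[Y]_a$ through the canonical identification $(\Sigma\times\Sigma')^{S_a}\cong\Sigma^{S_a}\times(\Sigma')^{S_a}$, and if $f\colon X\to Y$ is a factor map then by Curtis--Hedlund--Lyndon it is a sliding block code, so it induces a sliding block code $[f]\colon[X]_a\to[Y]_a$ that is continuous, surjective, and commutes with the block action; hence $\mathcal A$ is closed under factors as well.

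For the converse $[X]_a\in\gen\Rightarrow X\in\gen$ I would first record the useful reformulation that $X$ is a weak factor of any presentation of $[X]_a$ as a factor of a finite product of generators: if $g\colon P\twoheadrightarrow[X]_a$ with $P$ a product of generators, then $h:=U\circ g\colon P\to X$ satisfies $h\circ\sigma^p=\sigma^{a\odot p}\circ h$. Thus the content of this direction is that $\gen$ is stable under the ``deblocking'' that refines the $\Lambda$-action back to the full $\Z^d$-action. I would realize this concretely through the equivariant embedding $\Phi\colon X\to([X]_a)^{S_a}$, $\Phi(x)_s=[\sigma^s x]_a$, which conjugates $X$ to a subshift of the finite product $([X]_a)^{S_a}$ equipped with the twisted $\Z^d$-action that permutes the $S_a$-indexed coordinates and applies a block-shift on each ``carry''. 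The goal is then to show that this twisted product lies in $\gen$ and that $\Phi(X)$ is one of its factors, so that $X\in\gen$ follows from closure under products and factors.

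The main obstacle is exactly in this converse. Deblocking does not commute with products: the deblocked product of two subshifts is not their product but the sub-subshift in which the two factors carry the same phase in $\Z^d/\Lambda\cong S_a$, so a $\gen$-derivation of $[X]_a$ cannot simply be transported factor by factor to $X$. The crux is therefore to prove that the relevant phase-synchronized, or fibered, product over the finite periodic phase system $\Z^d/\Lambda$ remains in $\gen$; equivalently, that the induced system of a $\gen$-member over the finite-index subgroup $\Lambda$ is again in $\gen$. I expect this induced-system lemma, together with the correct treatment of the periodic-shift base case (where $[X_H]_a$ must be recognized as an honest $\gen$-member rather than merely a subshift of a periodic shift), to absorb essentially all of the work; the twisted action and the carry bookkeeping are the places where one must be most careful.
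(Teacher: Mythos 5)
The paper itself contains no proof of Theorem~\ref{thm_block_gen}: it is recalled from \cite{FH24}, so your proposal can only be assessed on its own merits rather than compared line by line. Your forward direction is essentially complete and correct: the structural induction via the class $\mathcal{A}=\{X:[X]_a\in\gen\}$, with $[\Sigma^{\Z^d}]_a$ a fullshift, $[X\times Y]_a\cong[X]_a\times[Y]_a$, preservation of countability through the unblocking bijection $U$, and the blocked factor map $[f]$, is exactly the right mechanism. (The periodic-shift base case deserves more than ``direct inspection'': one has to decompose the relevant index set into $\Z^d$-orbits to recognize $[X_H]_a$ as conjugate to a finite product of periodic shifts over suitable alphabets, especially when $H$ and $a\odot\Z^d$ are incommensurable; but this is routine bookkeeping.)

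The converse, which is the real content of the theorem, has a genuine gap, and moreover your concrete device points the wrong way. The map $\Phi:X\to([X]_a)^{S_a}$, $\Phi(x)_s=[\sigma^s x]_a$, is the \emph{co-induction} of the $\Lambda$-action, and it realizes $X$ only as a \emph{subsystem} of the twisted power. But $\gen$ cannot be closed under subsystems: every subshift is a subsystem of a fullshift, so such closure would put all subshifts in $\gen$, contradicting Theorem~\ref{thm_obs_C0} and the results of Section~\ref{sec_obstructions}. Hence exhibiting $\Phi(X)$ inside a member of $\gen$ proves nothing, and you supply no mechanism for your stated goal ``that $\Phi(X)$ is one of its factors''. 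The object that actually surjects onto $X$ is the \emph{induction}, not the co-induction: equip $[X]_a\times S_a$ with the carry action $g\cdot(y,s)=(\sigma^{\lambda}y,s')$, where $g+s=a\odot\lambda+s'$ with $s'\in S_a$; then $(y,s)\mapsto\sigma^s(U(y))$ is a continuous equivariant surjection onto $X$ (it need not be injective, which is harmless since only a factor map is required). After this correction, what remains is precisely the lemma you deferred as an expectation: for each generator $G$ (as a $\Lambda$-system) one must produce a member $\widehat{G}$ of $\gen$ and a factor map $\widehat{G}\times P\to\mathrm{Ind}(G)$ \emph{over} the finite phase orbit $P\cong\Z^d/(a\odot\Z^d)$, a finite (hence countable) subshift; for a fullshift, for instance, one reads off the $a$-spaced subarray dictated by the phase. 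These maps then glue across finite products exactly because the single shared $P$-coordinate forces all components to pick up the same carry --- this phase synchronization is what circumvents the failure of $\mathrm{Ind}$ to commute with products, which you correctly identified as the obstacle. You diagnosed the crux accurately, but as submitted the direction $[X]_a\in\gen\Rightarrow X\in\gen$ rests on an unproven lemma and on an embedding that, as stated, cannot be upgraded to membership in $\gen$.
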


\subsection{Corners and 2 even tiles}\label{sec_corners_2_even_II}
There are two equivalence classes of tilesets consisting of the 4 corners and 2 even tiles. We saw one of them (Class \ref{class_wires}) in Theorem \ref{thm_wires_notC0} where we showed that the induced subshift is not in~$\gen$. On the contrary, the other class induces a subshift in~$\gen$. A representative is shown in Figure \ref{fig_corners_2_even} (Class \ref{class_corners_2_even_II}).
\begin{figure}[!ht]
\centering
\includegraphics{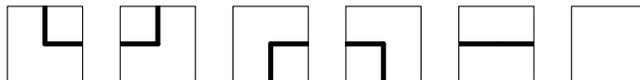}
\caption{The corners and 2 even tiles}\label{fig_corners_2_even}
\end{figure}

Let~$T$ be the complete even bicolor tileset and let~$F:\{0,1\}^{\Z^2}\to X_T$ be the factor map from Section \myref{sec_all_even} in \cite{FH24} (assign the input bits to the points of the grid, and if bits~$a$ and~$b$ are are assigned to the endpoints of an edge, then assign color~$a+b\mod 2$ to that edge). Let~$S\subseteq T$ be the tileset shown in Figure \ref{fig_corners_2_even}. Observe that it is obtained from~$T$ by removing the tiles whose horizontal edges are both black. Therefore,~$X_S$ is the image by~$F$ of the SFT~$X\subseteq \{0,1\}^{\Z^2}$ with 4 forbidden patterns~$\begin{smallmatrix}a&b\\c&d\end{smallmatrix}$ with~$a\neq b$ and~$c\neq d$.

We show that~$X\in\gen$, which implies that~$X_S\in\gen$ as~$X_S$ is a factor of~$X$. We define a continuous function~$G:\{0,1\}^{\Z^2}\to X$, which leaves every valid configuration unchanged, and corrects the invalid ones ($G$ is a retraction). The function~$G$ applies local rules that are periodic with periods~$(4,0)$ and~$(0,5)$, so the~$(4,5)$-higher power presentation of~$X$ is a factor of the~$(4,5)$-higher power presentation of the fullshift, which is a fullshift, implying that~$X\in\gen$ (Theorem \ref{thm_block_gen}).

Let~$R=[0,4]\times [0,5]$ and~$\partial R=(\{0,4\}\times [0,5])\cup ([0,4]\times \{0,5\})$ be its boundary. The function~$G$ works in two steps, i.e.,~$G$ is a composition~$G=G_2\circ G_1$:
\begin{enumerate}
\item First step:~$G_1$ replaces, at every position~$p=(4x,5y)$, a possible pattern~$\begin{smallmatrix}b&&&&c\\\text{\boxed{a}}&\overline{a}&a&\overline{a}&a\\d&&&&e\end{smallmatrix}$ with~$b\neq c$ or~$d\neq e$ by the pattern~$\begin{smallmatrix}b&&&&c\\\text{\boxed{a}}&a&a&a&a\\d&&&&e\end{smallmatrix}$ (the box indicates the origin position in the pattern, and~$\overline{a}=1-a$),
\item Second step: $G_2$ replaces, at every position~$p=(4x,5y)$, a possible~$R$-pattern~$\pi$ containing a forbidden pattern, by a valid pattern that coincides with~$\pi$ on its boundary~$\partial R$.
\end{enumerate}

The next result implies that~$G$ does not affect the valid configurations.
\begin{lemma}
The patterns~$\begin{smallmatrix}b&&&&c\\\text{\boxed{a}}&\overline{a}&a&\overline{a}&a\\d&&&&e\end{smallmatrix}$ with~$b\neq c$ or~$d\neq e$ are invalid. 
\end{lemma}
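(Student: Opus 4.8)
The plan is to show that any valid configuration in which this pattern appears is forced to satisfy both $b = c$ and $d = e$, so that the hypothesis $b \neq c$ or $d \neq e$ rules out every valid occurrence. The key observation is that the middle row $a, \overline{a}, a, \overline{a}, a$ alternates, hence every pair of horizontally adjacent cells in this row carries distinct bits; this alternation will act as a forcing mechanism propagating an equality constraint along the rows immediately above and below.

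First I would treat the top row, at positions $(0,1), (1,1), (2,1), (3,1), (4,1)$. The pattern prescribes only the two endpoints, $b$ at $(0,1)$ and $c$ at $(4,1)$, but in a valid configuration the three intermediate cells carry some bits; writing the five top-row bits as $y_0 = b, y_1, y_2, y_3, y_4 = c$, I would examine, for each $k \in \{0,1,2,3\}$, the $2\times 2$ block on columns $k, k+1$ and rows $0, 1$. Its lower (middle-row) edge has distinct bits by the alternation, so to avoid a forbidden pattern --- a $2\times 2$ block in which both horizontal pairs of bits differ --- the upper edge must carry equal bits, that is $y_k = y_{k+1}$. Chaining the four resulting equalities yields $b = y_0 = y_1 = \dots = y_4 = c$, hence $b = c$.

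The bottom row is symmetric: writing the five bits at $(0,-1), \dots, (4,-1)$ as $d = x_0, x_1, x_2, x_3, x_4 = e$, each $2\times 2$ block on rows $-1, 0$ has its upper (middle-row) edge distinct, which forces the lower edge equal, so $x_k = x_{k+1}$ and therefore $d = e$. Combining the two conclusions, every valid extension of the pattern forces $b = c$ and $d = e$ simultaneously, contradicting the hypothesis; hence the pattern is invalid. I do not anticipate a genuine obstacle here: the only point requiring care is the bookkeeping that the unspecified intermediate cells $y_1, y_2, y_3$ (resp.\ $x_1, x_2, x_3$) genuinely occur in a configuration and that the chain of constraints connects the two prescribed endpoints, turning a purely local forbidden-pattern condition into the global equalities $b = c$ and $d = e$.
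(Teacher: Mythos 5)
Your proof is correct and is essentially the paper's argument in contrapositive form: the paper fixes $b\neq c$, notes any completion $x_0,\ldots,x_4$ of the adjacent row must have some $x_i\neq x_{i+1}$, and exhibits the resulting forbidden $2\times 2$ block against the alternating row, while you run the same local forcing forward to conclude $b=c$ and $d=e$ in every valid extension. The only cosmetic difference is that you spell out both rows where the paper invokes symmetry; no gap in either direction.
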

\begin{proof}
Assume that~$c\neq b$, the case~$e\neq d$ is symmetric. For any pattern~$\begin{smallmatrix}x_0&x_1&x_2&x_3&x_4\\\text{\boxed{a}}&\overline{a}&a&\overline{a}&a\end{smallmatrix}$ with~$x_0=b$ and~$x_4=c$, there exists~$i<4$ such that~$x_i\neq x_{i+1}$, so it contains a forbidden pattern~$\begin{smallmatrix}x_i&x_{i+1}\\a&\overline{a}\end{smallmatrix}$ or~$\begin{smallmatrix}x_i&x_{i+1}\\\overline{a}&a\end{smallmatrix}$.
\end{proof}

We now show that~$G_2$ is well-defined.
\begin{lemma}
Let~$\pi$ be an~$R$-pattern such that no pattern~$\begin{smallmatrix}b&&&&\overline{b}\\\text{\boxed{a}}&\overline{a}&a&\overline{a}&a\end{smallmatrix}$ appears in~$\pi$ at position~$(0,0)$ and no pattern~$\begin{smallmatrix}\text{\boxed{a}}&\overline{a}&a&\overline{a}&a\\d&&&&\overline{d}\end{smallmatrix}$ appears in~$\pi$ at position~$(0,5)$.

There exists a valid~$R$-pattern~$\pi'$ that coincides with~$\pi$ on its boundary~$\partial R$.
\end{lemma}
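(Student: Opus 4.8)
The plan is to reformulate the validity constraint in terms of ``break sets'' and thereby reduce this two-dimensional filling problem to a one-dimensional routing problem. For a row $r\in\{0,1\}^{\{0,1,2,3,4\}}$ of $R$ let $B(r)=\{j\in\{0,1,2,3\}:r(j)\ne r(j+1)\}$ record the positions at which $r$ changes value. The forbidden patterns say exactly that a $2\times 2$ block is invalid iff both of its horizontal pairs differ, so two vertically consecutive rows $r,r'$ are compatible iff $B(r)\cap B(r')=\emptyset$. Hence, writing $r_0,\dots,r_5$ for the rows of an $R$-pattern from bottom to top and $\beta_i:=B(r_i)$, the pattern is valid iff $\beta_i\cap\beta_{i+1}=\emptyset$ for $0\le i\le 4$. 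Two elementary facts will be used repeatedly: once the left endpoint $r(0)$ is fixed, every $B\subseteq\{0,1,2,3\}$ is the break set of a unique row, whose right endpoint is then $r(0)+|B|\bmod 2$; in particular $|B(r)|\equiv r(0)+r(4)\pmod 2$.

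First I would read off the constraints imposed by the boundary. Agreeing with $\pi$ on $\partial R$ forces the bottom row $r_0$ and the top row $r_5$ (so $\beta_0,\beta_5$ are fixed), and prescribes the endpoints $r_i(0),r_i(4)$ of each interior row $i\in\{1,2,3,4\}$; by the parity fact, this fixes the parity $p_i$ of $|\beta_i|$. The problem thus reduces to choosing $\beta_1,\dots,\beta_4\subseteq\{0,1,2,3\}$ with $|\beta_i|\equiv p_i\pmod 2$ and all consecutive break sets (including the fixed $\beta_0,\beta_5$) disjoint; from such a choice I reconstruct each interior row from its break set and prescribed left endpoint, which automatically restores the prescribed right endpoint, so $\pi'$ agrees with $\pi$ on all of $\partial R$ and is valid.

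The crucial simplification is that any interior row of even parity may be taken constant, i.e.\ $\beta_i=\emptyset$, which is disjoint from everything and decouples its two neighbours. After setting all even interior rows to $\emptyset$, only the maximal runs of consecutive \emph{odd} interior rows still carry constraints, and these runs can be treated independently. A run whose neighbours are both $\emptyset$ is filled by alternating two distinct singletons $\{0\},\{1\},\{0\},\dots$. A run adjacent to the bottom boundary has $r_1$ odd, and here the hypothesis at $(0,0)$ is exactly what is needed: it forbids $\beta_0=\{0,1,2,3\}$ while $r_1$ is odd, so some position $j\notin\beta_0$ exists; I start the run at $\{j\}$ and then alternate. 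The symmetric hypothesis at $(0,5)$ handles a run adjacent to the top.

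The main obstacle is the single run that touches \emph{both} boundaries, namely when all four interior rows are odd. Then both hypotheses apply, giving $j\notin\beta_0$ and $k\notin\beta_5$, and I must route a length-four chain of odd sets between these constraints. I would take $\beta_1=\{j\}$ and $\beta_4=\{k\}$, then pick $\beta_2$ a singleton avoiding $j$ and $\beta_3$ a singleton avoiding both $\beta_2$ and $k$; since $\{0,1,2,3\}$ has four elements such choices always exist, even when $j=k$. This is the only place where the width and height of $R$ genuinely matter, and the slack of four break positions and four interior rows makes the verification routine. Reassembling the rows from the chosen break sets yields the required valid $\pi'$ restricting to $\pi$ on $\partial R$.
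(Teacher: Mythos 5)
Correct, and essentially the paper's own argument in cleaner notation: the paper likewise replaces each interior row by a constant or single-transition row (your $\beta_i=\emptyset$ or singleton break sets), chooses the transition columns greedily so that vertically consecutive transitions never coincide, and invokes the two hypotheses exactly where you do --- to extract a non-break column in a boundary row that would otherwise be alternating. If anything, your break-set bookkeeping over all four interior rows $1,2,3,4$ is tidier than the paper's write-up, which indexes only rows $j\in\{1,2,3\}$ and counts ``3 possible positions'' where there are four break positions in $\{0,1,2,3\}$; these minor slips are avoided by your accounting.
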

\begin{proof}
For~$j\in\{1,2,3\}$, let~$s_j$ and~$t_j$ be the starting symbol and ending symbol of row number~$j$ in~$\pi$. The row number~$j$ of~$\pi'$ will be made of a block of~$s_j$'s followed by a block of~$t_j$'s. We need to choose where the transition occurs (if~$s_j=t_j$, then there is no transition and the row is constant).

For~$j=1$, we choose a transition to avoid forbidden patterns in~$[0,4]\times [0,1]$: either~$s_1=t_1$, in which case row~$1$ is constant and no forbidden pattern can appear, or~$s_1\neq t_1$, in which case the assumption implies that there exists~$i<4$ such that~$\pi(i,0)=\pi(i+1,0)$. We then make the transition happen between~$i$ and~$i+1$, i.e.~the row~$1$ of~$\pi'$ is $s_i^{i+1}t_i^{4-i}$.

For~$j=3$, we apply a symmetric argument to avoid forbidden patterns in~$[0,4]\times [4,5]$.

We finally fill row~$j=2$. If~$s_2=t_2$ then again, row~$2$ is constant. If~$s_2\neq t_2$, then we need to make the transition happen at a position which is different from the possible transitions occurring in rows~$1$ and~$3$. It is always possible, because there are 3 possible positions.
\end{proof}

After applying~$G$, the~$R$-patterns appearing at positions~$(4x,5y)$ contain no forbidden pattern. In~$\Z^2$, any~$2\times 2$ square is contained in some block~$(4x,5y)+R$ because they overlap, so the configuration~$G(x)$ does not contain any forbidden pattern.

A configuration obtained by applying this local generation procedure to a random grid of bits is shown in Figure \ref{fig_random_corners_2_even}.

\begin{figure}[!ht]
\centering
\includegraphics{Positive/corners_2_even_bis-0}
\caption[Random tiling]{A random tiling using the even bicolor tiles except \smalltile{0101} and  \smalltile{1111}}\label{fig_random_corners_2_even}
\end{figure}

\subsection{Corners and 3 even tiles}\label{sec_corners_3_even}
The tileset~$S$ containing all the even tiles except the black tile \smalltile{1111} (Figure \ref{fig_corners_3_even}, Class \ref{class_corners_3_even}), induces a subshift that belongs to~$\gen$:
\begin{figure}[!ht]
\centering
\includegraphics{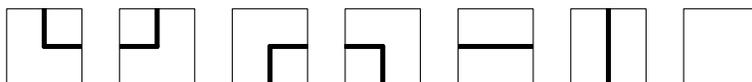}
\caption{The corners and 3 even tiles}\label{fig_corners_3_even}
\end{figure}

As in the previous section, let~$T$ be the tileset containing all the even bicolor tiles. Let~$R=[0,2]\times [0,3]$ and~$[X_S]_{(2,3)}$ the~$(2,3)$-higher power presentation of~$X_S$. It turns out that all the boundaries of the valid~$R$-patterns from~$X_T$, which are exactly the ones in which the black color appears an even number of times, are also boundaries of valid~$R$-patterns from~$X_S$. In other words, it is possible to change the content of an~$R$-pattern from~$X_T$ so that the new pattern has the same boundary, and does not contain the black tile \smalltile{1111} (see Lemma \ref{lem_correct} below). Therefore,~$[X_S]_{(2,3)}$ is a factor of~$[X_T]_{(2,3)}$, with a factor map that replaces each~$R$-pattern containing \smalltile{1111} by an~$R$-pattern with the same boundary and without the forbidden tile. Therefore, one has:
\begin{equation*}
X_T\in \gen\implies [X_T]_{(2,3)}\in\gen\implies [X_S]_{(2,3)}\in \gen\implies X_S\in \gen.
\end{equation*}

%

\begin{lemma}\label{lem_correct}
Every $R$-pattern made of even bicolor tiles has the same boundary as a~$R$-pattern made of even bicolor tiles and avoiding the black tile \smalltile{1111}.
\end{lemma}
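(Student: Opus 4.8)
The plan is to pass to the bit encoding provided by the factor map~$F$. On a simply connected region, an even bicolor pattern is exactly the image under~$F$ of a~$\{0,1\}$-labelling of the grid of corners of its cells, and this labelling is unique up to global complementation; moreover the colour of an edge is the parity of the sum of its two endpoint labels. Under this dictionary the forbidden tile \smalltile{1111} is precisely a cell whose four corner labels form a checkerboard, and two~$R$-patterns have the same boundary (the same colours on the perimeter edges) if and only if their labellings agree on the boundary corners up to a global complement. Thus it suffices to prove: for every labelling of the boundary corners of~$R$, the interior corners can be labelled so that no cell is a checkerboard.

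To analyse the checkerboard condition I would use the domain-wall picture. Draw a dual edge across every black edge, that is, across every edge whose two endpoints carry different labels; these dual edges form the interface between the~$0$-region and the~$1$-region. A cell is a checkerboard exactly when all four of its edges are black, i.e.\ when the interface crosses itself at the centre of that cell. Hence a labelling is checkerboard-free if and only if its interface is a union of non-crossing curves. The boundary labelling already fixes where the interface meets the perimeter: the endpoints are the midpoints of the black boundary edges, and their number equals the number of black edges on the perimeter, which is even (the sum, over all tiles, of the number of black edges is even, and every interior edge is counted twice, so the boundary count is even).

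The construction then completes the interface inside~$R$. Viewing~$R$ as a disk carrying an even number~$2k$ of marked endpoints on its boundary circle, I would pair them by~$k$ pairwise non-crossing arcs routed through the interior and colour the complementary regions alternately; because the arcs do not cross, no cell is a checkerboard, and because only interior structure has been added, the boundary labelling, hence the boundary edge-colouring, is unchanged. A global complement, if needed, then matches the prescribed boundary exactly. This shows that every even bicolor~$R$-pattern has the same boundary as a checkerboard-free one, i.e.\ one avoiding \smalltile{1111}.

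The main obstacle is the discretization: the abstract non-crossing arcs must be realized on the small grid of corners of~$R$, routing them through the interior block of corners without being forced to cross. This is exactly where the dimensions~$[0,2]\times[0,3]$ matter, since the interior must be wide and tall enough to separate nested walls, and it is the one point that needs genuine verification. I would discharge it either by giving an explicit placement rule (each of the four corner cells is handled by its single adjacent interior corner, after which the remaining interior corners are set to accommodate a nested pairing), or, since the perimeter of~$R$ carries only a bounded number of edges, by a short finite check over the parity-constrained, symmetry-reduced boundary colourings, the even-count condition guaranteeing that a completion always exists.
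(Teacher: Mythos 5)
Your reduction to bit labellings is exactly the paper's first step: lift the even pattern to bits on the $4\times 5$ grid of corners, observe that the black tile corresponds to the checkerboard blocks $\begin{smallmatrix}0&1\\1&0\end{smallmatrix}$ and $\begin{smallmatrix}1&0\\0&1\end{smallmatrix}$, and note that the perimeter edge colours depend only on the $14$ boundary corners, so it suffices to relabel the $2\times 3$ block of interior corners. From there the arguments diverge, and the divergence is where your proof has a genuine gap: the whole content of the lemma is the existence claim that for \emph{every} boundary labelling the interface endpoints can be paired by cell-disjoint, non-crossing dual paths inside the $3\times 4$ array of cells, and you explicitly defer this (``the one point that needs genuine verification''), offering only a vague placement sketch and the option of a finite check, neither of which is carried out. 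The claim is true but not automatic, because a non-crossing pairing in the continuous disk does not by itself discretize to vertex-disjoint paths in so small a grid: in the extreme case where all $14$ boundary edges are black you must route $7$ pairwise cell-disjoint paths through only $12$ cells (each of the four corner cells absorbs its own two black edges by a quarter-turn, the middle column carries a path joining the top-middle and bottom-middle edges through both interior cells, and the two remaining pairs on each vertical side just fit), so the construction uses every cell and the smallness of $R$ cuts both ways. As written, your proposal proves the easy dictionary and parity statements and leaves the essential existence statement unproven.

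The paper closes exactly this step with an explicit rule rather than topology: each of the six interior corners copies the bit of a designated adjacent boundary corner (Figure \ref{fig_correction_rule}), arranged so that afterwards every $2\times 2$ block of corners contains a horizontally or vertically adjacent equal pair, which simultaneously excludes both checkerboard patterns; this is uniform in the boundary labelling and verified by inspecting a single figure. If you want to keep the domain-wall picture, the honest ways to finish are the finite check you mention (at most $2^{14}$ boundary labellings, far fewer after the parity constraint and symmetry reduction), or to upgrade your hint about the four corner cells --- setting the unique interior corner adjacent to each --- into a complete copying rule, which is in effect precisely what the paper's figure does.
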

\begin{proof}
As explained in Section \ref{sec_corners_2_even_II}, an~$R$-pattern from the even tileset can be obtained by assigning bits to the corners, which form a~$4\times 5$ grid. The tile \smalltile{1111} corresponds to the patterns $\begin{smallmatrix}0&1\\1&0\end{smallmatrix}$ and~$\begin{smallmatrix}1&0\\0&1\end{smallmatrix}$. There is a way to correct a $4\times 5$ grid filled with bits in order to remove these patterns while leaving the bits on the boundary unchanged, as illustrated in Figure \ref{fig_correction_rule}. The idea is that after correction, each $2\times 2$ square will contain $2$ horizontally or vertically adjacent cells with the same bit. The corresponding~$R$-pattern made of even tiles will then have the same boundary as the original one and will not contain the tile \smalltile{1111}.
\begin{figure}[!ht]
\centering
\includegraphics{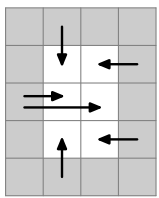}
\caption[Local rule]{Correction rule: copying the content of boundary cells to inner cells, as indicated by the arrows, removes the forbidden patterns $\begin{smallmatrix}0&1\\1&0\end{smallmatrix}$ and~$\begin{smallmatrix}1&0\\0&1\end{smallmatrix}$.}\label{fig_correction_rule}
\end{figure}
\end{proof}

%

A configuration obtained by applying the local generation procedure to a random grid of bits is shown in Figure \ref{fig_random_corners_3_even}.
\begin{figure}[!ht]
\centering
\includegraphics{Positive/even_tiling_no_black_bis-1}
\caption[Random tiling]{A random tiling with the even bicolor tiles except \smalltile{1111}}\label{fig_random_corners_3_even}
\end{figure}

\section{Classification of the even bicolor tilesets}\label{sec_classification}
We summarize the classification of the even bicolor tilesets.

We recall that there are 36 equivalence classes of such tilesets under the symmetry group action, that 8 of them are not minimal, i.e.~induce the same subshift as smaller tileset, so only 28 equivalence classes of tilesets need to be considered. Table \ref{table_even} shows a tileset for each one of these 28 equivalence classes. Table \ref{table_even_stupid} shows the 8 equivalence classes of non-minimal tilesets, with one tileset per class.

\begin{table}[!ht]
\centering
\begin{tabular}{|l|l|l|}
\hline
\rule{0mm}{2.5ex}In $\gen$&Not in $\gen$ (not in $\nar$?)&Not in $\nar$\\
\hline\hline
\rule{0mm}{3.2ex}\includegraphics{even_classes/small_class-1-0}&\includegraphics{even_classes/small_class-14-0}&\includegraphics{even_classes/small_class-19-4}\\
\includegraphics{even_classes/small_class-16-0}&\includegraphics{even_classes/small_class-23-0}&\includegraphics{even_classes/small_class-9-1}\\
\includegraphics{even_classes/small_class-4-0}&\includegraphics{even_classes/small_class-15-0}&\includegraphics{even_classes/small_class-20-6}\\
\includegraphics{even_classes/small_class-10-0}&\includegraphics{even_classes/small_class-27-0}&\includegraphics{even_classes/small_class-32-1}\\
\includegraphics{even_classes/small_class-17-0}&\includegraphics{even_classes/small_class-34-0}&\includegraphics{even_classes/small_class-21-8}\\
\includegraphics{even_classes/small_class-18-4}&\includegraphics{even_classes/small_class-25-0}&\includegraphics{even_classes/small_class-24-1}\\
\includegraphics{even_classes/small_class-11-0}&\includegraphics{even_classes/small_class-29-1}&\includegraphics{even_classes/small_class-33-2}\\
\includegraphics{even_classes/small_class-31-0}&\includegraphics{even_classes/small_class-35-0}&\\\cline{2-3}
\multicolumn{3}{|l|}{\includegraphics{even_classes/small_class-22-0}}\\
\multicolumn{3}{|l|}{\includegraphics{even_classes/small_class-26-0}}\\
\multicolumn{3}{|l|}{\includegraphics{even_classes/small_class-28-0}}\\
\multicolumn{3}{|l|}{\includegraphics{even_classes/small_class-30-0}}\\
\multicolumn{3}{|l|}{\includegraphics{even_classes/small_class-36-0}}\\
\hline
\end{tabular}
\caption{Classification of the even bicolor tilesets}\label{table_even}
\end{table}

For the subshifts that do not belong to~$\gen$, we do not know whether they belong to~$\nar$, although we expect that it is not the case.

\begin{table}[!ht]
\centering
\begin{tabular}{|l|l|}
\hline
\rule{0mm}{2.5ex}Empty subshift&Unused tile\\
\hline\hline
\rule{0mm}{3.2ex}\includegraphics{even_classes/small_class-2-1}&\includegraphics{even_classes/small_class-3-4}\\
\includegraphics{even_classes/small_class-6-0}&\includegraphics{even_classes/small_class-5-1}\\
&\includegraphics{even_classes/small_class-7-0}\\
&\includegraphics{even_classes/small_class-12-0}\\
&\includegraphics{even_classes/small_class-8-0}\\
&\includegraphics{even_classes/small_class-13-0}\\
\hline
\end{tabular}
\caption{The other even bicolor tilesets}\label{table_even_stupid}
\end{table}

In order to organize the classification, we will indicate the number of \emph{corner} tiles of a tileset, which are the tiles \smalltile{1100}, \smalltile{0110}, \smalltile{0011} and \smalltile{1001}. Note that this set of tiles is invariant under all the symmetries, so the number of corners in a tileset is invariant under the equivalence relation.

\subsection{Even tilesets of size 1}

There are 8 even tilesets of size 1, divided in 2 equivalence classes.
\subsubsection{No corner}\label{class_11}
In~$\gen$. There is one constant tiling.

\vspace{2mm}
\hspace{\stretch{1}}
\includegraphics{even_classes/small_class-1-0}
\hspace{\stretch{2}}
\includegraphics{even_classes/small_class-1-1}
\hspace{\stretch{2}}
\includegraphics{even_classes/small_class-1-2}
\hspace{\stretch{2}}
\includegraphics{even_classes/small_class-1-3}
\hspace{\stretch{1}}

\subsubsection{One corner}
There is no tiling.

\vspace{2mm}
\hspace{\stretch{1}}
\includegraphics{even_classes/small_class-2-0}
\hspace{\stretch{2}}
\includegraphics{even_classes/small_class-2-1}
\hspace{\stretch{2}}
\includegraphics{even_classes/small_class-2-2}
\hspace{\stretch{2}}
\includegraphics{even_classes/small_class-2-3}
\hspace{\stretch{1}}

\subsection{Even tilesets of size 2}
There are~$\binom{8}{2}=28$ even tilesets of size 2, divided in 5 equivalence classes.

\subsubsection{No corner (first case)}
In~$\gen$. There are 2 tilings. It is conjugate to~$\{0,1\}$ with the trivial action~$(i,j)\cdot x=x$.

\vspace{2mm}
\hspace{\stretch{1}}
\includegraphics{even_classes/small_class-16-0}
\hspace{\stretch{2}}
\includegraphics{even_classes/small_class-16-1}
\hspace{\stretch{1}}

\subsubsection{No corner (second case)}\label{class_22}
In~$\gen$. A configuration using the first tileset is a configuration of the~$\Z$-fullshift over~$\{0,1\}$, copied on each row. It is therefore conjugate to~$\{0,1\}^{\Z^2/(0,1)\Z}$.

\vspace{2mm}
\hspace{\stretch{1}}
\includegraphics{even_classes/small_class-4-2}
\hspace{\stretch{2}}
\includegraphics{even_classes/small_class-4-0}
\hspace{\stretch{2}}
\includegraphics{even_classes/small_class-4-1}
\hspace{\stretch{2}}
\includegraphics{even_classes/small_class-4-3}
\hspace{\stretch{1}}

\subsubsection{One corner + one even tile}
In~$\gen$. The corner cannot be used, the induced subshifts are the same as in Class \ref{class_11}.

\vspace{2mm}
\hspace{\stretch{1}}
\includegraphics{even_classes/small_class-3-12}
\hspace{\stretch{2}}
\includegraphics{even_classes/small_class-3-9}
\hspace{\stretch{2}}
\includegraphics{even_classes/small_class-3-6}
\hspace{\stretch{2}}
\includegraphics{even_classes/small_class-3-3}
\hspace{\stretch{1}}

\vspace{2mm}
\hspace{\stretch{1}}
\includegraphics{even_classes/small_class-3-4}
\hspace{\stretch{2}}
\includegraphics{even_classes/small_class-3-1}
\hspace{\stretch{2}}
\includegraphics{even_classes/small_class-3-14}
\hspace{\stretch{2}}
\includegraphics{even_classes/small_class-3-11}
\hspace{\stretch{1}}

\vspace{2mm}
\hspace{\stretch{1}}
\includegraphics{even_classes/small_class-3-8}
\hspace{\stretch{2}}
\includegraphics{even_classes/small_class-3-13}
\hspace{\stretch{2}}
\includegraphics{even_classes/small_class-3-2}
\hspace{\stretch{2}}
\includegraphics{even_classes/small_class-3-7}
\hspace{\stretch{1}}

\vspace{2mm}
\hspace{\stretch{1}}
\includegraphics{even_classes/small_class-3-0}
\hspace{\stretch{2}}
\includegraphics{even_classes/small_class-3-5}
\hspace{\stretch{2}}
\includegraphics{even_classes/small_class-3-10}
\hspace{\stretch{2}}
\includegraphics{even_classes/small_class-3-15}
\hspace{\stretch{1}}

\subsubsection{Two opposite corners}\label{class_24}
In~$\gen$. There are 2 configurations, the subshift is conjugate to~$\Z^2/H$ where~$H=\{(x,y)\in\Z^2:x+y=0\mod 2\}$, equivalently to~$\{0,1\}$ with the action~$(i,j)\cdot a=a+i+j\mod 2$.

\vspace{2mm}
\hspace{\stretch{1}}
\includegraphics{even_classes/small_class-10-0}
\hspace{\stretch{2}}
\includegraphics{even_classes/small_class-10-1}
\hspace{\stretch{1}}

\subsubsection{Two adjacent corners}
There is no tiling.

\vspace{2mm}
\hspace{\stretch{1}}
\includegraphics{even_classes/small_class-6-0}
\hspace{\stretch{2}}
\includegraphics{even_classes/small_class-6-1}
\hspace{\stretch{2}}
\includegraphics{even_classes/small_class-6-2}
\hspace{\stretch{2}}
\includegraphics{even_classes/small_class-6-3}
\hspace{\stretch{1}}

\subsection{Even tilesets of size 3}
There are~$\binom{8}{3}=56$ even tilesets of size 3, divided in 6 equivalence classes.

\subsubsection{No corner}
In~$\gen$. The subshift is the union of two subshifts from Class \ref{class_22}.

\vspace{2mm}
\hspace{\stretch{1}}
\includegraphics{even_classes/small_class-17-0}
\hspace{\stretch{2}}
\includegraphics{even_classes/small_class-17-3}
\hspace{\stretch{2}}
\includegraphics{even_classes/small_class-17-1}
\hspace{\stretch{2}}
\includegraphics{even_classes/small_class-17-2}
\hspace{\stretch{1}}

\subsubsection{One corner + two even tiles (first case)}
In~$\gen$. The subshift is countable: it contains the two uniform configurations made of \smalltile{1010} or \smalltile{0101} and each other configuration is completely determined by one position of the tile \smalltile{1100}.

\vspace{2mm}
\hspace{\stretch{1}}
\includegraphics{even_classes/small_class-18-3}
\hspace{\stretch{2}}
\includegraphics{even_classes/small_class-18-7}
\hspace{\stretch{2}}
\includegraphics{even_classes/small_class-18-6}
\hspace{\stretch{2}}
\includegraphics{even_classes/small_class-18-2}
\hspace{\stretch{1}}

\vspace{2mm}
\hspace{\stretch{1}}
\includegraphics{even_classes/small_class-18-4}
\hspace{\stretch{2}}
\includegraphics{even_classes/small_class-18-0}
\hspace{\stretch{2}}
\includegraphics{even_classes/small_class-18-1}
\hspace{\stretch{2}}
\includegraphics{even_classes/small_class-18-5}
\hspace{\stretch{1}}

\subsubsection{One corner + two even tiles (second case)}
In~$\gen$. The corner cannot be used, the induced subshifts are the same as in Class \ref{class_22}.

\vspace{2mm}
\hspace{\stretch{1}}
\includegraphics{even_classes/small_class-5-5}
\hspace{\stretch{2}}
\includegraphics{even_classes/small_class-5-8}
\hspace{\stretch{2}}
\includegraphics{even_classes/small_class-5-15}
\hspace{\stretch{2}}
\includegraphics{even_classes/small_class-5-3}
\hspace{\stretch{1}}

\vspace{2mm}
\hspace{\stretch{1}}
\includegraphics{even_classes/small_class-5-1}
\hspace{\stretch{2}}
\includegraphics{even_classes/small_class-5-10}
\hspace{\stretch{2}}
\includegraphics{even_classes/small_class-5-13}
\hspace{\stretch{2}}
\includegraphics{even_classes/small_class-5-7}
\hspace{\stretch{1}}

\vspace{2mm}
\hspace{\stretch{1}}
\includegraphics{even_classes/small_class-5-4}
\hspace{\stretch{2}}
\includegraphics{even_classes/small_class-5-14}
\hspace{\stretch{2}}
\includegraphics{even_classes/small_class-5-9}
\hspace{\stretch{2}}
\includegraphics{even_classes/small_class-5-2}
\hspace{\stretch{1}}

\vspace{2mm}
\hspace{\stretch{1}}
\includegraphics{even_classes/small_class-5-0}
\hspace{\stretch{2}}
\includegraphics{even_classes/small_class-5-12}
\hspace{\stretch{2}}
\includegraphics{even_classes/small_class-5-11}
\hspace{\stretch{2}}
\includegraphics{even_classes/small_class-5-6}
\hspace{\stretch{1}}

\subsubsection{Two adjacent corners + one even tile}
In~$\gen$. The corners cannot be used, the induced subshifts are the same as in Class \ref{class_11}.

\vspace{2mm}
\hspace{\stretch{1}}
\includegraphics{even_classes/small_class-7-4}
\hspace{\stretch{2}}
\includegraphics{even_classes/small_class-7-5}
\hspace{\stretch{2}}
\includegraphics{even_classes/small_class-7-2}
\hspace{\stretch{2}}
\includegraphics{even_classes/small_class-7-3}
\hspace{\stretch{1}}

\vspace{2mm}
\hspace{\stretch{1}}
\includegraphics{even_classes/small_class-7-0}
\hspace{\stretch{2}}
\includegraphics{even_classes/small_class-7-1}
\hspace{\stretch{2}}
\includegraphics{even_classes/small_class-7-6}
\hspace{\stretch{2}}
\includegraphics{even_classes/small_class-7-7}
\hspace{\stretch{1}}

\vspace{2mm}
\hspace{\stretch{1}}
\includegraphics{even_classes/small_class-7-8}
\hspace{\stretch{2}}
\includegraphics{even_classes/small_class-7-13}
\hspace{\stretch{2}}
\includegraphics{even_classes/small_class-7-10}
\hspace{\stretch{2}}
\includegraphics{even_classes/small_class-7-15}
\hspace{\stretch{1}}

\vspace{2mm}
\hspace{\stretch{1}}
\includegraphics{even_classes/small_class-7-12}
\hspace{\stretch{2}}
\includegraphics{even_classes/small_class-7-9}
\hspace{\stretch{2}}
\includegraphics{even_classes/small_class-7-14}
\hspace{\stretch{2}}
\includegraphics{even_classes/small_class-7-11}
\hspace{\stretch{1}}

\subsubsection{Two opposite corners + one even tile}\label{class_35}
In~$\gen$. The configurations using the first tileset are~$(1,-1)$-periodic, and the subshift is a factor of the~$(1,-1)$-periodic shift~$P$ over~$\{0,1\}$. The map~$f:P\to X$ works in two steps. First, the function~$f_1:P\to P$ removes the pattern~$11$. It is defined by~$f_1(x)_{i,j}=1\iff x_{i-1,j}x_{i,j}=01$. $f_1$ is a factor map and~$\im{f_1}$ is the set of~$(1,-1)$-periodic configurations avoiding the pattern~$11$. Next,~$f_2:\im{f_1}\to X$ assigns color~$x_{i,j}$ to the left edge of cell~$(i,j)$, which uniquely determines the configuration. The composition~$f=f_2\circ f_1$ is the sought factor map.


\vspace{2mm}
\hspace{\stretch{1}}
\includegraphics{even_classes/small_class-11-0}
\hspace{\stretch{2}}
\includegraphics{even_classes/small_class-11-5}
\hspace{\stretch{2}}
\includegraphics{even_classes/small_class-11-6}
\hspace{\stretch{2}}
\includegraphics{even_classes/small_class-11-3}
\hspace{\stretch{1}}

\vspace{2mm}
\hspace{\stretch{1}}
\includegraphics{even_classes/small_class-11-4}
\hspace{\stretch{2}}
\includegraphics{even_classes/small_class-11-1}
\hspace{\stretch{2}}
\includegraphics{even_classes/small_class-11-2}
\hspace{\stretch{2}}
\includegraphics{even_classes/small_class-11-7}
\hspace{\stretch{1}}

\subsubsection{Three corners}
In~$\gen$. Only the two opposite corners can be used, the induced subshifts are the same as in Class \ref{class_24}.

\vspace{2mm}
\hspace{\stretch{1}}
\includegraphics{even_classes/small_class-12-0}
\hspace{\stretch{2}}
\includegraphics{even_classes/small_class-12-1}
\hspace{\stretch{2}}
\includegraphics{even_classes/small_class-12-2}
\hspace{\stretch{2}}
\includegraphics{even_classes/small_class-12-3}
\hspace{\stretch{1}}

\subsection{Even tilesets of size 4}
There are~$\binom{8}{4}=70$ even tilesets of size 4, divided in 9 equivalence classes.

\subsubsection{No corner}
In~$\gen$. This tileset is the set of all tiles whose opposite edges have the same color. There is a conjugacy~$f:\{0,1\}^{\Z^2/(1,0)\Z}\times \{0,1\}^{\Z^2/(0,1)\Z}\to X$. On input~$(x,y)$, each cell~$(i,j)$ is assigned color~$x_{i,j}$ to its left and right edge and~$y_{i,j}$ to its lower and upper edge. Note that~$x_{i,j}$ only depends on~$j$ and~$y_{i,j}$ only depends on~$i$.

\vspace{2mm}
\hspace{\stretch{1}}
\includegraphics{even_classes/small_class-31-0}
\hspace{\stretch{1}}

\subsubsection{One corner + three even tiles}\label{class_irr_chevron}
Not in~$\nar$ (Theorem \ref{thm_tilesets_nar})

\vspace{2mm}
\hspace{\stretch{1}}
\includegraphics{even_classes/small_class-19-12}
\hspace{\stretch{2}}
\includegraphics{even_classes/small_class-19-9}
\hspace{\stretch{2}}
\includegraphics{even_classes/small_class-19-6}
\hspace{\stretch{2}}
\includegraphics{even_classes/small_class-19-3}
\hspace{\stretch{1}}

\vspace{2mm}
\hspace{\stretch{1}}
\includegraphics{even_classes/small_class-19-4}
\hspace{\stretch{2}}
\includegraphics{even_classes/small_class-19-1}
\hspace{\stretch{2}}
\includegraphics{even_classes/small_class-19-14}
\hspace{\stretch{2}}
\includegraphics{even_classes/small_class-19-11}
\hspace{\stretch{1}}

\vspace{2mm}
\hspace{\stretch{1}}
\includegraphics{even_classes/small_class-19-8}
\hspace{\stretch{2}}
\includegraphics{even_classes/small_class-19-13}
\hspace{\stretch{2}}
\includegraphics{even_classes/small_class-19-2}
\hspace{\stretch{2}}
\includegraphics{even_classes/small_class-19-7}
\hspace{\stretch{1}}

\vspace{2mm}
\hspace{\stretch{1}}
\includegraphics{even_classes/small_class-19-0}
\hspace{\stretch{2}}
\includegraphics{even_classes/small_class-19-5}
\hspace{\stretch{2}}
\includegraphics{even_classes/small_class-19-10}
\hspace{\stretch{2}}
\includegraphics{even_classes/small_class-19-15}
\hspace{\stretch{1}}

\subsubsection{Two adjacents corners + two even tiles (first case)}\label{class_nested_staples}
Not in~$\nar$ (Theorem \ref{thm_tilesets_nar}).

\vspace{2mm}
\hspace{\stretch{1}}
\includegraphics{even_classes/small_class-20-0}
\hspace{\stretch{2}}
\includegraphics{even_classes/small_class-20-1}
\hspace{\stretch{2}}
\includegraphics{even_classes/small_class-20-2}
\hspace{\stretch{2}}
\includegraphics{even_classes/small_class-20-3}
\hspace{\stretch{1}}

\vspace{2mm}
\hspace{\stretch{1}}
\includegraphics{even_classes/small_class-20-4}
\hspace{\stretch{2}}
\includegraphics{even_classes/small_class-20-5}
\hspace{\stretch{2}}
\includegraphics{even_classes/small_class-20-6}
\hspace{\stretch{2}}
\includegraphics{even_classes/small_class-20-7}
\hspace{\stretch{1}}

\subsubsection{Two adjacent corners + two even tiles (second case)}\label{class_histograms}
Not in~$\nar$ (Theorem \ref{thm_tilesets_nar}).

\vspace{2mm}
\hspace{\stretch{1}}
\includegraphics{even_classes/small_class-9-1}
\hspace{\stretch{2}}
\includegraphics{even_classes/small_class-9-4}
\hspace{\stretch{2}}
\includegraphics{even_classes/small_class-9-5}
\hspace{\stretch{2}}
\includegraphics{even_classes/small_class-9-3}
\hspace{\stretch{1}}

\vspace{2mm}
\hspace{\stretch{1}}
\includegraphics{even_classes/small_class-9-0}
\hspace{\stretch{2}}
\includegraphics{even_classes/small_class-9-6}
\hspace{\stretch{2}}
\includegraphics{even_classes/small_class-9-7}
\hspace{\stretch{2}}
\includegraphics{even_classes/small_class-9-2}
\hspace{\stretch{1}}

\subsubsection{Two adjacent corners + two even tiles (third case)}
In~$\gen$. The corners cannot be used, the induced subshifts are the same as in Class \ref{class_22}.

\vspace{2mm}
\hspace{\stretch{1}}
\includegraphics{even_classes/small_class-8-2}
\hspace{\stretch{2}}
\includegraphics{even_classes/small_class-8-4}
\hspace{\stretch{2}}
\includegraphics{even_classes/small_class-8-7}
\hspace{\stretch{2}}
\includegraphics{even_classes/small_class-8-1}
\hspace{\stretch{1}}

\vspace{2mm}
\hspace{\stretch{1}}
\includegraphics{even_classes/small_class-8-0}
\hspace{\stretch{2}}
\includegraphics{even_classes/small_class-8-6}
\hspace{\stretch{2}}
\includegraphics{even_classes/small_class-8-5}
\hspace{\stretch{2}}
\includegraphics{even_classes/small_class-8-3}
\hspace{\stretch{1}}

\subsubsection{Two opposite corners + two even tiles (first case)}
In~$\gen$. Let~$X$ be induced by the first tileset. Its configurations are~$(1,1)$-periodic and it is a factor of the~$(1,1)$-periodic shift on~$\{0,1\}$. The factor map~$f:\{0,1\}^{\Z^2/(1,1)\Z}\to X$ on input~$x$ assigns color~$x_{i,j}$ to the left edge of cell~$(i,j)$, which uniquely determines the tiles.


\vspace{2mm}
\hspace{\stretch{1}}
\includegraphics{even_classes/small_class-22-0}
\hspace{\stretch{2}}
\includegraphics{even_classes/small_class-22-2}
\hspace{\stretch{2}}
\includegraphics{even_classes/small_class-22-1}
\hspace{\stretch{2}}
\includegraphics{even_classes/small_class-22-3}
\hspace{\stretch{1}}

\subsubsection{Two opposite corners + two even tiles (second case)}\label{class_stretched_irr_stairs}
Not in $\gen$ (Theorem \ref{thm_wires_notC0}).

\vspace{2mm}
\hspace{\stretch{1}}
\includegraphics{even_classes/small_class-14-1}
\hspace{\stretch{2}}
\includegraphics{even_classes/small_class-14-6}
\hspace{\stretch{2}}
\includegraphics{even_classes/small_class-14-5}
\hspace{\stretch{2}}
\includegraphics{even_classes/small_class-14-2}
\hspace{\stretch{1}}

\vspace{2mm}
\hspace{\stretch{1}}
\includegraphics{even_classes/small_class-14-0}
\hspace{\stretch{2}}
\includegraphics{even_classes/small_class-14-4}
\hspace{\stretch{2}}
\includegraphics{even_classes/small_class-14-7}
\hspace{\stretch{2}}
\includegraphics{even_classes/small_class-14-3}
\hspace{\stretch{1}}

\subsubsection{Three corners + one even tile}

In~$\gen$. Only the two opposite corners can be used, the induced subshifts are the same as in Class \ref{class_35}.

\vspace{2mm}
\hspace{\stretch{1}}
\includegraphics{even_classes/small_class-13-8}
\hspace{\stretch{2}}
\includegraphics{even_classes/small_class-13-13}
\hspace{\stretch{2}}
\includegraphics{even_classes/small_class-13-2}
\hspace{\stretch{2}}
\includegraphics{even_classes/small_class-13-7}
\hspace{\stretch{1}}

\vspace{2mm}
\hspace{\stretch{1}}
\includegraphics{even_classes/small_class-13-0}
\hspace{\stretch{2}}
\includegraphics{even_classes/small_class-13-5}
\hspace{\stretch{2}}
\includegraphics{even_classes/small_class-13-10}
\hspace{\stretch{2}}
\includegraphics{even_classes/small_class-13-15}
\hspace{\stretch{1}}

\vspace{2mm}
\hspace{\stretch{1}}
\includegraphics{even_classes/small_class-13-12}
\hspace{\stretch{2}}
\includegraphics{even_classes/small_class-13-9}
\hspace{\stretch{2}}
\includegraphics{even_classes/small_class-13-6}
\hspace{\stretch{2}}
\includegraphics{even_classes/small_class-13-3}
\hspace{\stretch{1}}

\vspace{2mm}
\hspace{\stretch{1}}
\includegraphics{even_classes/small_class-13-4}
\hspace{\stretch{2}}
\includegraphics{even_classes/small_class-13-1}
\hspace{\stretch{2}}
\includegraphics{even_classes/small_class-13-14}
\hspace{\stretch{2}}
\includegraphics{even_classes/small_class-13-11}
\hspace{\stretch{1}}

\subsubsection{Four corners}\label{class_corners}
In~$\gen$ (Section \myref{sec_corners} in \cite{FH24}).

\vspace{2mm}
\hspace{\stretch{1}}
\includegraphics{even_classes/small_class-26-0}
\hspace{\stretch{1}}

\subsection{Even tilesets of size 5}
There are~$\binom{8}{5}=56$ even tilesets of size 5, divided in 6 equivalence classes.

\subsubsection{One corner + four even tiles}\label{class_crossing_brackets}

Not in~$\nar$ (Theorem \ref{thm_tilesets_nar}).

\vspace{2mm}
\hspace{\stretch{1}}
\includegraphics{even_classes/small_class-32-3}
\hspace{\stretch{2}}
\includegraphics{even_classes/small_class-32-2}
\hspace{\stretch{1}}

\vspace{2mm}
\hspace{\stretch{1}}
\includegraphics{even_classes/small_class-32-1}
\hspace{\stretch{2}}
\includegraphics{even_classes/small_class-32-0}
\hspace{\stretch{1}}

\subsubsection{Two adjacent corners + three even tiles}\label{class_staples_gaps}
Not in~$\nar$ (Theorem \ref{thm_tilesets_nar}).

\vspace{2mm}
\hspace{\stretch{1}}
\includegraphics{even_classes/small_class-21-4}
\hspace{\stretch{2}}
\includegraphics{even_classes/small_class-21-5}
\hspace{\stretch{1}}

\vspace{2mm}
\hspace{\stretch{1}}
\includegraphics{even_classes/small_class-21-0}
\hspace{\stretch{2}}
\includegraphics{even_classes/small_class-21-1}
\hspace{\stretch{1}}

\vspace{2mm}
\hspace{\stretch{1}}
\includegraphics{even_classes/small_class-21-8}
\hspace{\stretch{2}}
\includegraphics{even_classes/small_class-21-13}
\hspace{\stretch{1}}

\vspace{2mm}
\hspace{\stretch{1}}
\includegraphics{even_classes/small_class-21-12}
\hspace{\stretch{2}}
\includegraphics{even_classes/small_class-21-9}
\hspace{\stretch{1}}

\vspace{2mm}
\hspace{\stretch{1}}
\includegraphics{even_classes/small_class-21-2}
\hspace{\stretch{2}}
\includegraphics{even_classes/small_class-21-3}
\hspace{\stretch{1}}

\vspace{2mm}
\hspace{\stretch{1}}
\includegraphics{even_classes/small_class-21-6}
\hspace{\stretch{2}}
\includegraphics{even_classes/small_class-21-7}
\hspace{\stretch{1}}

\vspace{2mm}
\hspace{\stretch{1}}
\includegraphics{even_classes/small_class-21-10}
\hspace{\stretch{2}}
\includegraphics{even_classes/small_class-21-15}
\hspace{\stretch{1}}

\vspace{2mm}
\hspace{\stretch{1}}
\includegraphics{even_classes/small_class-21-14}
\hspace{\stretch{2}}
\includegraphics{even_classes/small_class-21-11}
\hspace{\stretch{1}}

\subsubsection{Two opposite corners + three even tiles}\label{class_irr_stairs}
Not in~$\gen$ (Theorem \ref{thm_wires_notC0}).

\vspace{2mm}
\hspace{\stretch{1}}
\includegraphics{even_classes/small_class-23-4}
\hspace{\stretch{2}}
\includegraphics{even_classes/small_class-23-1}
\hspace{\stretch{1}}

\vspace{2mm}
\hspace{\stretch{1}}
\includegraphics{even_classes/small_class-23-0}
\hspace{\stretch{2}}
\includegraphics{even_classes/small_class-23-5}
\hspace{\stretch{1}}

\vspace{2mm}
\hspace{\stretch{1}}
\includegraphics{even_classes/small_class-23-2}
\hspace{\stretch{2}}
\includegraphics{even_classes/small_class-23-7}
\hspace{\stretch{1}}

\vspace{2mm}
\hspace{\stretch{1}}
\includegraphics{even_classes/small_class-23-6}
\hspace{\stretch{2}}
\includegraphics{even_classes/small_class-23-3}
\hspace{\stretch{1}}

\subsubsection{Three corners + two even tiles (first case)}\label{class_slanted_histograms}
Not in~$\nar$ (Theorem \ref{thm_tilesets_nar}).

\vspace{2mm}
\hspace{\stretch{1}}
\includegraphics{even_classes/small_class-24-7}
\hspace{\stretch{2}}
\includegraphics{even_classes/small_class-24-2}
\hspace{\stretch{1}}

\vspace{2mm}
\hspace{\stretch{1}}
\includegraphics{even_classes/small_class-24-0}
\hspace{\stretch{2}}
\includegraphics{even_classes/small_class-24-5}
\hspace{\stretch{1}}

\vspace{2mm}
\hspace{\stretch{1}}
\includegraphics{even_classes/small_class-24-3}
\hspace{\stretch{2}}
\includegraphics{even_classes/small_class-24-6}
\hspace{\stretch{1}}

\vspace{2mm}
\hspace{\stretch{1}}
\includegraphics{even_classes/small_class-24-4}
\hspace{\stretch{2}}
\includegraphics{even_classes/small_class-24-1}
\hspace{\stretch{1}}

\subsubsection{Three corners + two even tiles (second case)}\label{class_flames}
Not in $\gen$ (Theorem \ref{thm_wires_notC0}).

\vspace{2mm}
\hspace{\stretch{1}}
\includegraphics{even_classes/small_class-15-4}
\hspace{\stretch{2}}
\includegraphics{even_classes/small_class-15-2}
\hspace{\stretch{1}}

\vspace{2mm}
\hspace{\stretch{1}}
\includegraphics{even_classes/small_class-15-0}
\hspace{\stretch{2}}
\includegraphics{even_classes/small_class-15-6}
\hspace{\stretch{1}}

\vspace{2mm}
\hspace{\stretch{1}}
\includegraphics{even_classes/small_class-15-5}
\hspace{\stretch{2}}
\includegraphics{even_classes/small_class-15-3}
\hspace{\stretch{1}}

\vspace{2mm}
\hspace{\stretch{1}}
\includegraphics{even_classes/small_class-15-1}
\hspace{\stretch{2}}
\includegraphics{even_classes/small_class-15-7}
\hspace{\stretch{1}}

\vspace{2mm}
\hspace{\stretch{1}}
\includegraphics{even_classes/small_class-15-14}
\hspace{\stretch{2}}
\includegraphics{even_classes/small_class-15-9}
\hspace{\stretch{1}}

\vspace{2mm}
\hspace{\stretch{1}}
\includegraphics{even_classes/small_class-15-12}
\hspace{\stretch{2}}
\includegraphics{even_classes/small_class-15-11}
\hspace{\stretch{1}}

\vspace{2mm}
\hspace{\stretch{1}}
\includegraphics{even_classes/small_class-15-8}
\hspace{\stretch{2}}
\includegraphics{even_classes/small_class-15-15}
\hspace{\stretch{1}}

\vspace{2mm}
\hspace{\stretch{1}}
\includegraphics{even_classes/small_class-15-10}
\hspace{\stretch{2}}
\includegraphics{even_classes/small_class-15-13}
\hspace{\stretch{1}}

\subsubsection{Four corners + one even tile}\label{class_corners_white}
Not in~$\gen$ (Theorem \ref{thm_wires_notC0}).

\vspace{2mm}
\hspace{\stretch{1}}
\includegraphics{even_classes/small_class-27-0}
\hspace{\stretch{2}}
\includegraphics{even_classes/small_class-27-1}
\hspace{\stretch{1}}

\vspace{2mm}
\hspace{\stretch{1}}
\includegraphics{even_classes/small_class-27-2}
\hspace{\stretch{2}}
\includegraphics{even_classes/small_class-27-3}
\hspace{\stretch{1}}

\subsection{Even tilesets of size 6}
There are~$\binom{8}{6}=28$ even tilesets of size 6, divided in 5 equivalence classes.

\subsubsection{Two adjacent corners + four even tiles}\label{class_overlapping_staples}
Not in~$\nar$ (Theorem \ref{thm_tilesets_nar}).

\vspace{2mm}
\hspace{\stretch{1}}
\includegraphics{even_classes/small_class-33-0}
\hspace{\stretch{2}}
\includegraphics{even_classes/small_class-33-1}
\hspace{\stretch{1}}

\vspace{2mm}
\hspace{\stretch{1}}
\includegraphics{even_classes/small_class-33-2}
\hspace{\stretch{2}}
\includegraphics{even_classes/small_class-33-3}
\hspace{\stretch{1}}

\subsubsection{Two opposite corners + four even tiles}\label{class34}
Not in~$\gen$ (Theorem \ref{thm_wires_notC0}).

\vspace{2mm}
\hspace{\stretch{1}}
\includegraphics{even_classes/small_class-34-0}
\hspace{\stretch{2}}
\includegraphics{even_classes/small_class-34-1}
\hspace{\stretch{1}}

\subsubsection{Three corners + three even tiles}\label{class25}
Not in~$\gen$ (Theorem \ref{thm_wires_notC0}).

\vspace{2mm}
\hspace{\stretch{1}}
\includegraphics{even_classes/small_class-25-8}
\hspace{\stretch{2}}
\includegraphics{even_classes/small_class-25-13}
\hspace{\stretch{1}}

\vspace{2mm}
\hspace{\stretch{1}}
\includegraphics{even_classes/small_class-25-0}
\hspace{\stretch{2}}
\includegraphics{even_classes/small_class-25-5}
\hspace{\stretch{1}}

\vspace{2mm}
\hspace{\stretch{1}}
\includegraphics{even_classes/small_class-25-12}
\hspace{\stretch{2}}
\includegraphics{even_classes/small_class-25-9}
\hspace{\stretch{1}}

\vspace{2mm}
\hspace{\stretch{1}}
\includegraphics{even_classes/small_class-25-4}
\hspace{\stretch{2}}
\includegraphics{even_classes/small_class-25-1}
\hspace{\stretch{1}}

\vspace{2mm}
\hspace{\stretch{1}}
\includegraphics{even_classes/small_class-25-2}
\hspace{\stretch{2}}
\includegraphics{even_classes/small_class-25-7}
\hspace{\stretch{1}}

\vspace{2mm}
\hspace{\stretch{1}}
\includegraphics{even_classes/small_class-25-10}
\hspace{\stretch{2}}
\includegraphics{even_classes/small_class-25-15}
\hspace{\stretch{1}}

\vspace{2mm}
\hspace{\stretch{1}}
\includegraphics{even_classes/small_class-25-6}
\hspace{\stretch{2}}
\includegraphics{even_classes/small_class-25-3}
\hspace{\stretch{1}}

\vspace{2mm}
\hspace{\stretch{1}}
\includegraphics{even_classes/small_class-25-14}
\hspace{\stretch{2}}
\includegraphics{even_classes/small_class-25-11}
\hspace{\stretch{1}}

\subsubsection{Four corners + two even tiles (first case)}\label{class_wires}
Not in~$\gen$ (Theorem \ref{thm_wires_notC0}).

\vspace{2mm}
\hspace{\stretch{1}}
\includegraphics{even_classes/small_class-29-0}
\hspace{\stretch{2}}
\includegraphics{even_classes/small_class-29-1}
\hspace{\stretch{1}}

\subsubsection{Four corners + two even tiles (second case)}\label{class_corners_2_even_II}
In~$\gen$ (Section \ref{sec_corners_3_even}).

\vspace{2mm}
\hspace{\stretch{1}}
\includegraphics{even_classes/small_class-28-0}
\hspace{\stretch{2}}
\includegraphics{even_classes/small_class-28-1}
\hspace{\stretch{1}}

\vspace{2mm}
\hspace{\stretch{1}}
\includegraphics{even_classes/small_class-28-2}
\hspace{\stretch{2}}
\includegraphics{even_classes/small_class-28-3}
\hspace{\stretch{1}}

\subsection{Even tilesets of size 7}
There are~$\binom{8}{7}=8$ even tilesets of size 7, divided in 2 equivalence classes.

\subsubsection{Three corners + four even tiles}\label{class35}
Not in~$\gen$ (Theorem \ref{thm_wires_notC0}).

\vspace{2mm}
\hspace{\stretch{1}}
\includegraphics{even_classes/small_class-35-2}
\hspace{\stretch{2}}
\includegraphics{even_classes/small_class-35-3}
\hspace{\stretch{1}}

\vspace{2mm}
\hspace{\stretch{1}}
\includegraphics{even_classes/small_class-35-0}
\hspace{\stretch{2}}
\includegraphics{even_classes/small_class-35-1}
\hspace{\stretch{1}}

\subsubsection{Four corners + three even tiles}\label{class_corners_3_even}
In~$\gen$ (Section \ref{sec_corners_3_even}).

\vspace{2mm}
\hspace{\stretch{1}}
\includegraphics{even_classes/small_class-30-0}
\hspace{\stretch{2}}
\includegraphics{even_classes/small_class-30-1}
\hspace{\stretch{1}}

\vspace{2mm}
\hspace{\stretch{1}}
\includegraphics{even_classes/small_class-30-2}
\hspace{\stretch{2}}
\includegraphics{even_classes/small_class-30-3}
\hspace{\stretch{1}}

\subsection{Even tileset of size 8}\label{class_all_even}
There is 1 even tileset of size 8.

\subsubsection{All the even tiles}

In~$\gen$ (Section \myref{sec_all_even} in \cite{FH24}).

\vspace{2mm}
\hspace{\stretch{1}}
\includegraphics{even_classes/small_class-36-0}
\hspace{\stretch{1}}

\section*{Acknowledgments}
\addcontentsline{toc}{section}{Acknowledgements}

We thank Guilhem Gamard, Emmanuel Jeandel, Julien Provillard and Alexis Terrassin for interesting discussions on the subject.


\end{document}